\pgfplotsset{compat=1.14}
\let\OLDthebibliography\thebibliography
\renewcommand\thebibliography[1]{
  \OLDthebibliography{#1}
  \setlength{\parskip}{0pt}
  \setlength{\itemsep}{0pt plus 0.3ex}
}
\newcommand{\specialcell}[2][l]{%
  \begin{tabular}[#1]{@{}l@{}}#2\end{tabular}}
\newtheorem{thm} {\noindent \bf{Theorem}}[section]
\newtheorem{prop}{\noindent \bf{Proposition}}[section]
\renewcommand{\(}{\left(}
\renewcommand{\)}{\right)}
\newcommand{\up}{$\uparrow$ }
\newcommand{\down}{$\downarrow$ }
\newcommand{\uni}{$\wedge$ }
\newcommand{\x}{$\times$ }
\renewcommand{\b}{\bfseries}
\definecolor{LightCyan}{rgb}{0.8,1,1}
\renewcommand{\c}{\cellcolor{LightCyan}}
\definecolor{LightCyan2}{rgb}{0.6,1,1}
\renewcommand{\d}{\cellcolor{LightCyan2}}
\numberwithin{equation}{section} 
\newsavebox{\measure@tikzpicture}
  \def\tikz@width{#1}%
  \def\tikzscale{1}\begin{lrbox}{\measure@tikzpicture}%
  \edef\tikzscale{\pgfmathresult}%
\numberwithin{equation}{section}
\begin{document}
\title{Strategic customer behavior in a queueing system with alternating information structure}
\author[1]{Yiannis Dimitrakopoulos}
\author[1]{Antonis Economou}
\author[2]{Stefanos Leonardos}
\affil[1]{National and Kapodistrian University of Athens, 
Department of Mathematics\\
Panepistemiopolis, Athens 15784, Greece}

\affil[2]{Singapore University of Technology and Design,
8 Somapah Rd, 487372 Singapore}

\affil[ ]{\{dimgiannhs, aeconom\}@math.uoa.gr; stefanos\_leonardos@sutd.edu.sg}

\date{\today}

\maketitle

\noindent \textbf{Abstract:} Strategic customer behavior is strongly influenced by the level of information that is provided to customers. Hence, to optimize the design of queueing systems, many studies consider various versions of the same service model and compare them under different information structures. In particular, two extreme versions are usually considered and compared: the observable in which customers are informed about the number of customers in the system and the unobservable in which they are only informed about the system parameters, e.g., arrival and service rates. In the present work, we study a model that bridges these two versions. More concretely, we assume that the system alternates between observable and unobservable periods. We characterize and compute customer equilibrium joining/balking strategies and show that the present model unifies and extends existing approaches of both heterogeneously observable models and models with delayed observations. More importanly, our findings indicate that an alternating information structure implies in general higher equilibrium throughput and social welfare in comparison to both the observable and unobservable cases. We complement our results with numerical experiments and provide managerial insight on the optimal control of the system parameters.

\vspace{0.5cm}

\noindent \textbf{Keywords:} Queueing Games; Strategic Customers; Equilibrium Strategies; Alternating Information Structure

\section{Introduction}

The strategic customer behavior in queueing systems has received considerable attention since the pioneering paper of Naor \cite{Na1969}, who studied the M/M/1 queue from an economic viewpoint. Naor assumed that the customers are active entities who decide whether to join or balk after observing the queue length. He also considered the problem of a social planner and a monopolist who take into account the customer strategic behavior when they aim to optimize the social welfare and the profit respectively. The study of this observable version of the M/M/1 queue was subsequently complemented by Edelson and Hildebrand \cite{EdHi1975} who considered the same problems for the unobservable version of the system. In their model, the arriving customers are not allowed to observe the number of customers in the system and make their decisions relying solely on the knowledge of its operational and economic parameters. Since then, the literature has grown considerably. Hassin and Haviv \cite{HaHa2003} and Hassin \cite{Ha2016} survey the basic methodology and results till 2003 and from 2003 till 2016 respectively. Stidham \cite{St2009}, in his monograph on optimal design of queueing systems, provides a comprehensive overview of the various fundamentals models in the area.\par
 
One recurrent theme in the strategic queueing literature is the study of the effect of the level of information that is provided to the customers on their strategic behavior. This is an important theoretical issue per se, but it is also important for a social planner or a monopolist that are interested in the optimal design of a given system. What is interesting is that the effect of the level of information is ambiguous. Using the framework of the M/M/1 queue to reduce the complexity of the problem, existing studies demonstrate that more information can benefit or hurt the customers and/or the service provider, depending on various parameters and structural assumptions of the underlying model, see e.g., Hassin \cite{Ha1986}, Chen and Frank \cite{ChFr2004}, Guo and Zipkin \cite{GuZi2007}.\par
The results in this literature show that neither the observable nor the unobservable versions of the M/M/1 queue are preferable for the whole range of the underlying operational and economic parameters. To gain further insight, a number of authors studied the M/M/1 queue with strategic customers under information structures that lie between the observable and unobservable versions. To the best of our knowledge, there are three main ideas that have appeared in the literature that bridge the observable and unobservable versions of the M/M/1 queue: partially observable models \cite{EcKa2008,GuZi2009,SiHaStZh2016,KiKi2017,HaKo2014}, heterogeneously observable models \cite{EcGr2015,HuLiWa2018} and models with delayed observations \cite{BuEcVa2017,HaRo2014,Ro2013}. We provide a brief overview of these results in Subsection \ref{sub:review}. \par
In the present paper, we develop an alternative model that bridges observable and unobservable models and unifies preexisting approaches. Specifically, we consider a queueing system of M/M/1 type with strategic customers, which alternates between observable and unobservable periods: customers that arrive during observable periods see the number of present customers before making their joining/balking decisions, whereas customers that arrive during unobservable periods are only informed about the system parameters, e.g., arrival and service rates, but not about the actual queue length. 
 
\subsection{Motivation, objectives and contribution}\label{motivation-objectives}

The model of the present study is motivated by a number of different situations that arise in practice depending on whether the alternation between observable and unobservable periods is intentional or unintentional. A first such situation occurs when the information-providing mechanism has to respect some periodicity of the mode of operation of the system. For example, a given system may have to follow the day-night alternation and some of its features should be shut down in the night. An unintentional alternation between observable and unobservable periods occurs when the information-providing mechanism is unreliable. In this case, the system is observable as long as the mechanism operates properly, but it becomes unobservable when it fails and is under repair. Finally, a third case occurs when the administrator of the system intentionally stops to provide queue-length information for economic or other reasons, but resumes the information provision later. This is reasonable under various scenarios, e.g., if the information-providing mechanism is costly.\par
To understand how the information structure affects strategic customer behavior and system performance, we study these situations under the unified framework of a First-Come-First-Served (FCFS) M/M/1 queue that alternates between observable and unobservable periods. Upon arrival, customers decide whether to join or balk given the information that they receive from the system. In case that they enter the queue, they may renege (abandon the queue) any time later. Some basic considerations about the optimal equilibrium strategies are fairly straightforward in this setting. Customers that arrive at observable periods use Naor's \cite{Na1969} threshold strategy $n_e$: they join if the queue is less than a particular length and balk otherwise. Moreover, due to the FCFS discipline and the exponentially distributed service times, customers that enter during observable periods do not have an incentive to renege at any subsequent time. In contrast, customers that arrive during unobservable periods, will enter with some probability $q$ that depends on their expected payoff. Such customers may have an incentive to renege at the time of the first change to observable mode after their arrival instants. Regarding reneging, they again follow a threshold strategy with tolerance $n_s$ (in terms of queue length) that is at least as high as Naor's threshold. \par
Based on these considerations, the search for equilibrium strategies can be restricted in the class of strategies that are parametrized by two non-negative integer thresholds $n_e$ and $n_s$, and a probability $q$. We refer to such strategies as \emph{potentially equilibrium strategies} (PES) and denote them by $(n_e,n_s,q)$-(PES). Assuming that the population of customers adopts a PES, the state of the system can be described by a Continuous Time Markov Chain (CTMC). This enables the computation of the expected net benefit of a tagged customer that joins during an unobservable period in terms of the steady-state distribution of the CTMC and in turn, the equilibrium joining probability $q_e$. More specifically, using that the number of customers in the system is stochastically increasing in $q$ and that their net benefit is strictly decreasing in $q$, we derive  that such an equilibrium joining probability $q_e$ always exists, is unique and can be characterised via the system parameters. This concludes the technical analysis of the model and provides the required tractability for numerical experiments on its economic, operational and managerial aspects.\par
In this respect, our theoretical findings and experimental comparative statics indicate that the alternating information structure crucially affects system performance. To study the response of the equilibrium throughput and social welfare to the operational parameters of the system that can be tuned by a central decision maker, we perform two sets of numerical experiments (comparative statics): (i) in the fraction of time that the system remains unobservable and (ii) in the duration of unobservable periods.\par
By controlling the fraction of time that the system remains unobservable, we show that typically, a properly adjusted alternating system strictly improves over the continuously observable and continuously unobservable systems in terms of both equilibrium throughput and social welfare. This bridges the two cases that have been considered by Chen and Frank \cite{ChFr2004}, who show that the observable and unobservable systems are preferable for high and low arrival rates respectively. Regulating the fraction $\gamma$ of informed customers (by tuning the fraction time that the system is observable) has multiple effects on the system. In case of low arrival rates, an increase in $\gamma$ increases the customer entrance probability for the unobservable periods, but also increases the abandonment probability since the system passes quickly from the unobservable to the observable mode. This double effect is reversed in the case of high arrival rates. If reneging is forbidden and the alternation between observable and unobservable periods becomes very fast, then the present model reduces to the model of Hu, Li and Wang \cite{HuLiWa2018}. In this case, our results indicate a unimodal equilibrium throughput which agrees with the findings of \cite{HuLiWa2018}.\par
Turning to the second set of experiments, the control of the duration of the unobservable periods can be equivalently viewed as control of the rate of announcements that reveal the queue length to all present customers. The most interesting finding in this case is that the equilibrium throughput and typically also the social welfare are optimized for durations (announcement rates) strictly between $0$ and $\infty$. Decreasing the mean duration of unobservable periods between observable periods of fixed mean length (and hence, increasing the announcement rate) increases both the equilibrium joining probability and the reneging probability. The trade-off between the two effects is not clear and this is the reason for the unimodality of the throughput. Finally, in this setting and in the limiting case of very short observable periods, the present model reduces to the model of Burnetas, Economou and Vasiliadis \cite{BuEcVa2017}. The above results are in agreement with the findings of \cite{BuEcVa2017} which indicate that the equilibrium throughput is a non-monotonic function of the announcement rate.\par
A main finding of the numerical experiments concerns the diverse changes in the performance of the system that result from the complex interaction of its operational parameters. Their conflicting effects on main quantities, such as the joining probability, reneging probability or the expected customer net benefit, preclude general conclusions on equilibrium behavior over broad ranges of parameter values. For practical purposes, this implies that each different sets of parameters should be analyzed individually. However, it also underlines the importance of the derived equilibrium characterization that retains the functionality of the present model and which enables the comparative statics analysis via numerical methods and tools. 
 \par
We conclude our analysis with a third set of comparative statics on the economic parameters. Specifically, we study the effect on customer strategic behavior of the fraction of the entrance fee that is refundable and of the customers' service valuation. Our results show that the equilibrium throughput is an increasing or unimodal function in the refundable percentage. On the other hand, all performance measures are increasing both in the service valuation and in the service--entrance fee ratio for constant total (entrance and service) fee, as expected. \par
Our results imply that the alternating information structure of the present model unifies other existing approaches by obtaining as limiting cases both the heterogeneously observable model of \cite{HuLiWa2018} and the delayed observations model of \cite{BuEcVa2017}. Moreover, it significantly extends them: by fine-tuning the mean durations of the observable and unobservable periods, the present model can typically achieve higher equilibrium throughput and/or social welfare than the corresponding optimal instances of the models in \cite{BuEcVa2017} and \cite{HuLiWa2018} under the same operational and economic parameters. This comes at no cost in terms of tractability, since the current model always has a unique equilibrium customer strategy that can be characterised via well behaved economic quantities (customer net benefit). Hence, depending on the parameters that can be controlled in a given system or application, the present model is of practical relevance for its optimal design both from a social and a managerial perspective.

\subsection{Literature review}\label{sub:review}

Hassin \cite{Ha1986} compared the observable and the unobservable versions of the M/M/1 queue with strategic customers regarding their joining/balking dilemma, by focusing on the social welfare and a monopolist's profit under a profit-maximizing admission fee. Let $\lambda$ and $\mu$ denote respectively the arrival and service rate of a M/M/1 queue, $R$ be the service value and $C$ be the waiting cost per time unit for the customers. Hassin showed that if $R\mu\leq2 C$, then the profit under a profit-maximizing admission fee is larger for the observable model, for all $\lambda>0$. Hence, a profit maximizer prefers to reveal the queue length to the customers. If, however, $R\mu>2C$, then the profit under a profit-maximizing admission fee is larger for the observable model, if and only if $\lambda\geq\lambda^Z$ for some unique threshold arrival rate, $\lambda^Z$. Thus, in this case, a profit maximizer prefers to reveal the queue length only when $\lambda\geq\lambda^Z$. In other words, there is a range of the parameters ($R\mu>2C $ and $\lambda<\lambda^Z$), for which the provision of more information to the customers hurts the service provider. The same properties also hold for the social welfare under a profit-maximizing fee, but with a different critical value $\lambda^S$ in place of $\lambda^Z$. Thus, there is a range of the parameters ($\lambda<\lambda^S$), for which the provision of more information hurts the society as a whole. 

Chen and Frank \cite{ChFr2004} compared the observable and the unobservable versions of the M/M/1 queue by focusing on the equilibrium effective arrival rate (which is the same as the throughput since there are no abandonments), under an arbitrary fixed admission fee. For a given potential arrival rate $\lambda$, let $\lambda_e^{(o)}(\lambda)$ and $\lambda_e^{(u)}(\lambda)$ denote the corresponding equilibrium effective arrival rates in the observable and unobservable versions respectively. Chen and Frank proved that $\lambda_e^{(o)}(\lambda)-\lambda_e^{(u)}(\lambda)$ monotonically increases in $\lambda$ and there exists a critical value $\lambda^*$ such that $\lambda_e^{(o)}(\lambda^*)-\lambda_e^{(u)}(\lambda^*)=0$. Therefore, to attract more customers to the system, it is advisable to conceal the queue length for potential arrival rates $\lambda$ with $\lambda<\lambda^*$, and to reveal it when $\lambda>\lambda^*$.
Shone, Knight and Williams \cite{ShKnWi2013} considered the same problem of comparing the equilibrium throughputs, $\lambda_e^{(o)}$ and $\lambda_e^{(u)}$, between the observable and unobservable versions of the M/M/1 queue. They provided necessary and sufficient conditions on the system parameters under which the equilibrium effective arrival rates are equal in the two versions. Moreover, they investigated the behavior of the equilibrium effective arrival rates as functions of the normalized service value $\frac{R\mu}{C}$. In particular, they showed that the number of distinct normalized service values for which $\lambda_e^{(o)}=\lambda_e^{(u)}$ is monotonically increasing with respect to the utilization rate $\rho=\frac{\lambda}{\mu}$ and tends to infinity as $\rho\rightarrow 1$.

Guo and Zipkin \cite{GuZi2007} compared the observable, unobservable and workload observable versions of the M/M/1 queue, under a general reward-cost structure that generalizes the standard Naor's linear reward-cost structure. Under this framework, the service value is $R$, but a customer's waiting cost is $\theta E[c(W)]$, where $W$ stands for the waiting time, $c(w)$ is a common basic cost function for all customers, and $\theta$ is a customer-specific parameter that represents the sensitivity to delay. In other words, a customer with delay sensitivity $\theta$ has expected utility $R-\theta E[c(W)]$, if she decides to join. The authors showed that the maximum equilibrium throughput of the system is achieved at different information levels according to the values of the underlying parameters. Wang, Cui and Wang \cite{Wa18} consider an M/M/1 queueing system with a pay-for-priority option, and study customers’ joint decisions
between joining/balking and pay-for-priority. They compare the server’s revenue between the observable and the unobservable settings and interestingly, find that the service provider is better off with the observable setting when the system load is either low or high, but benefits more from the unobservable setting when the system load is medium.\par
The main conclusion from these studies is that the primary factor that determines whether information is good or bad for the service provider and the customers is the distribution function of the customer delay sensitivity and not the common basic cost function. As mentioned above, the relevant literature has reported three main approaches that aim to bridge the observable and unobservable versions of the M/M/1 queue: partially observable models, heterogeneously observable models and models with delayed observations.

In partially observable models, the state-space of the queue length of the M/M/1 queue is partitioned into subsets and the arriving customers are not informed about the exact queue length, but rather about the subset it belongs to. In other words, the waiting space can be considered to be `compartmented' and the customers are informed only about the compartment in which they are going to be placed.  Economou and Kanta \cite{EcKa2008} considered the case of regular compartmentalization (all compartments being of the same size), and showed that an ideal compartment-size exists, and it may be strictly between 1 (which corresponds to the observable version) and $\infty$ (which corresponds to the unobservable version).
Guo and Zipkin \cite{GuZi2009} considered the general case of compartments with possibly different sizes and proved several interesting results about the comparison of two partitions of the state-space, one a refinement of the other. More recently, Simhon, Hayel, Starobinski and Zhu \cite{SiHaStZh2016} considered the M/M/1 queue with strategic customers that face the dilemma of joining/balking when the administrator informs the customers about the current queue length only when it is short, i.e., when it does not exceed a certain threshold $D$. This corresponds to the partition of the state-space to the subsets $\{0\},\{1\},\{2\},\ldots,\{D\}$ and $\{D+1,D+2,\ldots\}$.  The authors proved that the equilibrium throughput is a monotone function of $D$ and hence, if the administrator's goal is to maximize throughput, then the optimal policy is one of the extremes, either the observable or the unobservable queue. Kim and Kim \cite{KiKi2017} considered the generalization of the last model, by assuming that the customers are informed about the current queue length only when it belongs to an arbitrary subset $O$. The authors proved the counter-intuitive result that the optimal partition for the maximization of the throughput of the system corresponds to a set $O$ that contains all the states above a threshold, i.e., it is preferable to allow the customers to observe the queue length only when it is large! Finally, Hassin and Koshman \cite{HaKo2014} considered a model where the arriving customers are only informed about whether the queue length is less than an exogenously given threshold $N$ or not. They focused on the profit maximization problem for the dynamic pricing version of this model (i.e., different prices are offered to the customers according to whether the queue length is below $N$ or not) and proved the interesting result that the choice of $N=1$ (meaning that customers are informed only if the server is idle) guarantees at least half of the maximum value that can be generated by the system. 

In heterogeneously observable models, the population of customers is divided into informed and uninformed customers, i.e., only a fraction of customers is allowed to observe the queue length before making their decisions. Two such models have been studied by Economou and Grigoriou \cite{EcGr2015} and Hu, Li and Wang \cite{HuLiWa2018}. Economou and Grigoriou determined the equilibrium strategies in the case where the service values and the waiting costs are different for informed and uninformed customers. Hu, Li and Wang proved that throughput and social welfare are in general unimodal and not monotonous in the fraction of informed customers. In other words, information heterogeneity in the population can lead to more efficient outcomes in terms of the system throughput or social welfare than information homogeneity. Moreover, they showed that for an overloaded system (with utilization factor sufficiently higher than 1), social welfare always reaches its maximum when some fraction of customers is uninformed.

In models with delayed observations, the customers decide whether to join or balk without knowing the state of the system, but later on they are informed about their current position and may renege. Burnetas, Economou and Vasiliadis \cite{BuEcVa2017} considered the M/M/1 queue where the administrator of the system makes periodic announcements to the customers about their current positions. The model was motivated by a situation that occurs when people submit petitions through certain web-based systems. Then, upon submission, the customers receive a confirmation message with the registration number of their petition. Later on, they learn the number of pending petitions in front of them. This is done either by periodic refreshments of a web page that indicates the registration number of the currently processed petition or by periodic bulk emails that announce the status of the pending petitions. The authors have shown that the equilibrium throughput is a non-monotonic function of the announcement rate. This implies that there exists an ideal announcement rate, strictly between 0 and $\infty$, that maximizes the throughput. In other words, some delay in providing information to the customers is beneficial in terms of throughput. Another model with delayed observation characteristics is the so-called `armchair decision' problem introduced by Hassin and Roet-Green \cite{HaRo2014} (see also Roet-Green \cite{Ro2013}). In this model, the customers observe the queue length before reaching it, using probably some web-based application. Then, they decide whether to leave their armchairs and go to the service facility or not, but when they arrive at the system, they are informed about the current queue length and should make their second decision, to join or balk. For more papers and thorough overviews of the results that concern the control of information in queueing systems with strategic customers, see Section 3.5, \textit{Information Control}, in Hassin \cite{Ha2016} and the recent review paper of Ibrahim \cite{Ib2018}.\par\vspace{0.2cm}
The rest of the paper is organized as follows: In Section \ref{model-strategies}, we define the model with the underlying reward-cost structure and derive sufficient conditions for customer equilibrium strategies. Section \ref{Performance-evaluation} presents our computations on system performance that lead to the characterization of equilibrium customer strategies in Section \ref{monotonicity-and-equilibrium}. In Section \ref{numerical-conclusions}, we perform a number of numerical experiments and conclude our analysis with some useful take-away messages and managerial insight in Section \ref{conclusions}. Some technical material is presented in Appendix \ref{generating-functions-appendix}. 

\section{Model and strategies}\label{model-strategies}

We consider an M/M/1 queue with arrival rate $\lambda$ and service rate $\mu$, where arriving customers decide whether to join or balk. The system alternates between unobservable and observable periods that are exponentially distributed with rates $\theta$ and $\zeta$, respectively. 
The customers are homogeneous in their valuations. More specifically, each one of them values service $R$ units and accumulates costs at rate $C$, as long as she stays in the system, either waiting or being served. The administrator of the system imposes an entrance fee $f_e$, which is paid by each customer who decides to join. Moreover, he imposes a service fee $f_s$ which is paid only by those customers that receive service. There is also a refund $r$ which is given to customers that decide to renege before their service has been completed. The refund may be positive (partial or full refund of the entrance fee) or negative (which means that there is a penalty for an abandonment) or even $-\infty$ (meaning that reneging is impossible or forbidden). 

We assume that the customers are strategic and risk neutral, in the sense that they want to maximize their expected net benefit, knowing that the others have similar objectives. During observable periods, the customers are informed about the queue length upon arrival and then make their joining/balking decisions, whereas in unobservable periods, they make their joining/balking decisions relying solely on their knowledge of the system parameters. All customers are informed about their current positions when the system enters an observable period and any time, they may decide to renege. 

To avoid trivial cases, we assume throughout the paper that the following two conditions hold:
\begin{equation}\label{R-big-enough}
R>f_e+f_s+\frac{C}{\mu},
\end{equation}
and
\begin{equation}\label{r-a-small-enough}
r\leq f_e.
\end{equation}

Condition \eqref{R-big-enough} ensures that the value of service is high enough so that a customer that observes an empty system prefers to join; otherwise the system would be continuously empty. 
\begin{table}[!htb]
\centering

\renewcommand{\arraystretch}{1.1} 
\begin{tabularx}{\linewidth}{rlX@{}}
\toprule
\multicolumn{3}{l}{\d\b Economic parameters}\\ 
$R>0$ && customer service valuation\\
$C>0$ && customer waiting cost per time unit in the system\\
$f_e\ge0$ && entrance fee\\
$f_s\ge0$ && service fee\\
$r\phantom{\hspace{2pt}\ge0}$&& $r\ge0$: \hspace{15pt}refund to reneging customers\\
&& $r<0$: \hspace{15pt}penalty for abandonment\\
&& $r=-\infty$: \hspace{1pt}reneging is forbidden \\[0.3cm]
\multicolumn{3}{l}{\d\b Operational parameters}\\
mode $1$ && observable system\\
mode $0$ && unobservable system\\
$\lambda>0$ && arrival rate\\
$\mu>0$ && service rate\\
$\theta>0$ && exponential rate of the duration of unobservable periods\\
$\zeta>0$ && exponential rate of the duration of observable periods \\
$B>0$ && mean duration of an information cycle: $B=1/\zeta+1/\theta$\\
$\gamma\in(0,1)$ && fraction of customers who arrive at observable mode: $\gamma=\theta/(\zeta+\theta)$\\
\multicolumn{1}{l}{$(n_e,n_s,q)$} && potential equilibrium strategy (PES)\\
$n_e\ge0$ && join/balk threshold (Naor's) at arrival instants during observable mode\\
$n_s\ge0$ && stay/renege threshold at change instants from unobservable to observable mode\\
$q\in [0,1]$ && joining probability in unobservable mode\\[0.3cm]
\multicolumn{3}{@{}l}{\d\b Performance measures}\\
$q_e\in[0,1]$ && equilibrium joining probability in unobservable mode\\
$\mu_e\ge0$ && equilibrium throughput: mean number of service completions per time unit\\
$S_e\ge0$ && equilibrium social welfare per time unit \\
\bottomrule
\end{tabularx}
\caption{Model parameters and notation}
\label{tab:notation}
\end{table}
Condition \eqref{r-a-small-enough} ensures that there are no customers that enter and remain in the system only instantaneously. Indeed, if $r>f_e$, then even a customer who observes a huge queue length upon arrival is willing to enter, but will renege immediately.\par

To analyze customer strategic behavior, we should take into account that the arriving customers face the dilemma of whether to join or balk upon arrival, and then, the joining customers continuously face the dilemma of whether to stay or renege. 

If a customer arrives at an observable period and finds $n$ customers in the system, then she enters if and only if $R-f_e-f_s-C\frac{n+1}{\mu}\geq 0$ or equivalently if

\begin{equation}\label{obs-join-threshold}
n+1\leq \left\lfloor \frac{\mu(R-f_e-f_s)}{C} \right\rfloor =n_e.
\end{equation}

Hence such a customer uses Naor's threshold and enters if her position in the system after joining is at most $n_e$, given by \eqref{obs-join-threshold}. If she enters, it is certain that she will stay in the system till her service completion, because no matter what the other customers do, her expected net benefit will be non-negative at all subsequent moments (because of the FCFS discipline and the Markovian framework). \par
If a customer arrives at an unobservable period, then she will enter with some probability $q$. If she enters, then she will certainly stay till the first time that the system becomes observable or till her service completion, whatever occurs first (again because of the assumption of exponentially distributed times). Suppose that the system becomes observable before the service completion of a customer that joined in an unobservable period. If $n$ is the current position of the customer, then she will stay if and only if $R-f_s-C\frac{n}{\mu}\geq r$ or equivalently if

\begin{equation}\label{unobs-to-obs-stay-threshold}
n\leq \left\lfloor \frac{\mu(R-r-f_s)}{C} \right\rfloor =n_s.
\end{equation}
Note that $f_e$ does not play any role for this decision, since it is refundable only to the extent specified by $r$. 
Of course, if the tagged customer decides to stay after the moment that the system becomes observable, then she will stay till her service completion, as her expected benefit cannot deteriorate. Note, also, that 
$$n_e\leq n_s$$
because of condition \eqref{r-a-small-enough}. From the above discussion, we conclude that an equilibrium strategy should
\begin{itemize}
\item prescribe \emph{enter} if the offered position for an arriving customer during an observable period is at most $n_e$ given by \eqref{obs-join-threshold},
\item prescribe \emph{enter} with some probability $q$ if a customer arrives during an unobservable period, and
\item prescribe \emph{stay} if the position of a customer at the time of a change from an unobservable to observable mode is at most $n_s$ given by \eqref{unobs-to-obs-stay-threshold}.
\end{itemize}
Thus, a \emph{potential equilibrium strategy} (PES) should satisfy the above three conditions and will be referred to as an $(n_e,n_s,q)$-PES. Note that only $q$ remains to be determined. All model parameters are summarized for convenience in Table~\ref{tab:notation}.

\section{Customer expected net benefit}\label{Performance-evaluation}

Suppose that the population of the customers adopts an $(n_e,n_s,q)$-PES and consider a tagged customer. Her best response against the $(n_e,n_s,q)$-PES will necessarily be an $(n_e,n_s,q')$-PES, with a possibly different joining probability $q'$. This follows from the discussion in Section \ref{model-strategies}. To compute the best response of the tagged customer, suppose that she arrives during an unobservable period and let $\mathcal{U}(n_e,n_s,q)$ be her expected net benefit if the other customers follow the $(n_e,n_s,q)$-PES and the tagged customer decides to join and will use the same threshold $n_s$ for staying/reneging as the other customers at the time where the system becomes observable. 

To determine $\mathcal{U}(n_e,n_s,q)$, we condition on the number of customers $N^-_q$ that are present in the system just before the arrival of the tagged customer (noting that $N^-_q$  is not observable for her). Then,
\begin{equation}\label{unconditional-benefit-01}
\mathcal{U}(n_e,n_s,q)=\sum_{n=0}^{\infty} \Pr[N^-_q=n] U(n;n_s),
\end{equation}
where $U(n;n_s)$ stands for the conditional expected net benefit of the tagged customer if she decides to join, given that $n$ customers are present in the system upon her arrival (excluding herself) and all of them use the same reneging threshold $n_s$. Note that $U(n;n_s)$ is independent from the strategy parameters $n_e$, $q$, and the operational model parameters $\lambda$, $\zeta$. Also, the subscript $q$ in $N^{-}_q$ is present to emphasize the dependence on $q$ which is crucial in several derivations. To evaluate $\mathcal{U}(n_e,n_s,q)$ using \eqref{unconditional-benefit-01}, we start with computing the conditional expected values $U(n;n_s)$. 

\begin{thm}\label{conditional-expected-net-benefit-thm}
The conditional expected net benefit of an arriving customer if she decides to join, given that $n$ customers are present in the system upon her arrival and all of them use the reneging threshold $n_s$ is given by
\begin{equation}\label{conditional-expected-net-benefit-formula}
U(n;n_s)=\left\{
\begin{array}{ll}
R-f_e-f_s-\frac{C(n+1)}{\mu} & \mbox{if } 0\leq n < n_s\\
r-f_e-\frac{C}{\theta}+( R-r-f_s-\frac{Cn_s}{\mu}+\frac{C}{\theta})(\frac{\mu}{\mu+\theta})^{n+1-n_s} & \mbox{if } n \geq n_s.
\end{array}
\right.
\end{equation} 
\end{thm}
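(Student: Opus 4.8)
The plan is to compute $U(n;n_s)$ directly, tracking the tagged customer's net benefit from the instant she joins. Since the entrance fee $f_e$ is a sunk cost, I would write $U(n;n_s)=-f_e+W(n+1)$, where $W(j)$ denotes her expected net benefit \emph{excluding} $f_e$, given that she currently occupies position $j$ during an unobservable period. The key structural observations are that, by FCFS and the fact that no one reneges during an unobservable period, her position can only decrease, and only through service completions occurring at rate $\mu$; moreover her stay/renege decision is made once and for all, at the first transition to observable mode. This suggests splitting the analysis according to whether her initial position $n+1$ is at most $n_s$ (Case $0\le n<n_s$) or exceeds it (Case $n\ge n_s$).

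For the first case, note that if $n+1\le n_s$ then her position stays $\le n_s$ at all times, so at every transition to observable mode her position is within the stay threshold and she never reneges; hence she is certain to complete service. Since she and everyone ahead of her (at positions $1,\dots,n$, all below $n_s$) remain in the system, the server is always busy and her sojourn consists of exactly $n+1$ independent exponential($\mu$) service times, of expected total length $(n+1)/\mu$. Therefore $W(n+1)=R-f_s-C(n+1)/\mu$, which yields the first branch of \eqref{conditional-expected-net-benefit-formula}.

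For the second case I would use a first-step (renewal) argument based on memorylessness. From any position $j>n_s$, the next event occurs at rate $\mu+\theta$, so the expected waiting cost accrued until it is $C/(\mu+\theta)$; with probability $\mu/(\mu+\theta)$ it is a service completion (her position drops to $j-1$) and with probability $\theta/(\mu+\theta)$ it is a transition to observable mode, at which she reneges and collects $r$. This gives the linear recursion $W(j)=-C/(\mu+\theta)+\tfrac{\mu}{\mu+\theta}W(j-1)+\tfrac{\theta}{\mu+\theta}r$, valid for $j>n_s$ and anchored by the safe-zone value $W(n_s)=R-f_s-Cn_s/\mu$ from the first case. Solving this geometric recursion with ratio $\mu/(\mu+\theta)$ and simplifying the fixed point to $r-C/\theta$ (using $1-\tfrac{\mu}{\mu+\theta}=\tfrac{\theta}{\mu+\theta}$) gives $W(n_s+m)=(r-C/\theta)+(\tfrac{\mu}{\mu+\theta})^{m}(R-r-f_s-Cn_s/\mu+C/\theta)$; setting $m=n+1-n_s$ and subtracting $f_e$ produces the second branch.

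The main obstacle I anticipate is the careful accounting of the continuously accrued waiting cost, in particular justifying that the expected cost between consecutive events is exactly $C/(\mu+\theta)$ and that bundling this running cost together with the terminal rewards ($R-f_s$, the refund $r$, or the safe-zone value) into one linear recursion is legitimate. A secondary point to verify is the boundary case $n_s=0$, where the safe zone is empty and position $j=1$ leads directly to service completion rather than into the interior recursion; here one checks that the formula persists by formally interpreting $W(n_s)=W(0)=R-f_s$, so the unified expression remains valid. The limit $m\to\infty$, which recovers the baseline $r-f_e-C/\theta$ (renege at the first transition, after mean wait $1/\theta$), provides a convenient consistency check on the algebra.
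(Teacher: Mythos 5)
Your proof is correct, but it takes a genuinely different route from the paper's for the second branch. The paper's proof conditions on the time $X$ until the first observable transition (exponential with rate $\theta$) and on the number $K$ of service completions during $X$ (Poisson given $X=x$), splits into three cases (she reneges at the announcement; she stays and finishes after it; she is served before it), and for the last case invokes the order-statistics property of Poisson events (Campbell's theorem) to get the expected departure time $\frac{(n+1)x}{k+1}$; the closed form then emerges from evaluating the resulting integrals and geometric sums. Your argument replaces all of this with a first-step analysis on the tagged customer's position: the competing exponentials at total rate $\mu+\theta$ give the linear recursion $W(j)=-\frac{C}{\mu+\theta}+\frac{\mu}{\mu+\theta}\,W(j-1)+\frac{\theta}{\mu+\theta}\,r$ for $j>n_s$, anchored at the safe-zone value $W(n_s)=R-f_s-\frac{Cn_s}{\mu}$, and the geometric factor $\left(\frac{\mu}{\mu+\theta}\right)^{n+1-n_s}$ in the answer drops out of solving this difference equation around its fixed point $r-\frac{C}{\theta}$. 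The step you flag as the main obstacle is not a gap: bundling the running cost $\frac{C}{\mu+\theta}$ with the terminal rewards is the standard reward decomposition for a CTMC with exponential holding times, justified by the independence of the holding time and the type of the next event for competing exponentials together with the strong Markov property — exactly the memorylessness the paper also relies on. What each approach buys: yours is more elementary (no order statistics, no integral evaluations, no double sums) and explains structurally \emph{why} the answer is geometric in $n+1-n_s$; the paper's direct conditioning is computationally heavier but delivers the finer conditional quantities $E[U_n\mid X=x,K=k]$ along the way. Your boundary check at $n_s=0$ is harmless but moot, since conditions \eqref{R-big-enough} and \eqref{r-a-small-enough} force $n_s\geq n_e\geq 1$.
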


\begin{proof} Consider a tagged customer that arrives and decides to join during an unobservable period. Suppose that the system has $n$ other customers at that moment. We consider two cases according to whether $n< n_s$ or not.

In the first case, where $n< n_s$, the customer will certainly complete her service. Therefore, she will receive the service reward $R$ and will pay the entrance and service fees, $f_e$ and  $f_s$. Moreover, her sojourn time in the system will be the sum of $n+1$ service times and we conclude with the first branch of \eqref{conditional-expected-net-benefit-formula}.

In the second case, where the tagged customer arrives when there are $n\geq n_s$ customers and decides to join, let $X$ be the time till the beginning of the first observable period after her arrival and $K$ be the number of service completions that occur during $X$. Let $U_n$ be the net benefit of the tagged customer. Then, by conditioning on $X$ and $K$ we have
\begin{equation}\label{mathcal-U-appendix}
U(n;n_s)=\int_0^{\infty} \sum_{k=0}^{\infty}E[U_n\mid X=x,K=k]\Pr[K=k\mid X=x]f_X(x)dx,
\end{equation}
where $f_X(x)$ is the probability density of $X$. 
Note that $f_X(x)=\theta e^{-\theta x}$, $x>0$, because of the memoryless property of the exponential distribution, and that $\Pr[K=k\mid X=x]=e^{-\mu x} \frac{(\mu x)^k}{k!}$, $k\geq 0$, assuming that the server is always busy during time $X$. Moreover,  the conditional mean value $E[U_n\mid X=x,K=k]$ is given by
\begin{equation}\label{conditional-U-n-given-X-K}
E[U_n\mid X=x,K=k]=\left\{
\begin{array}{ll}
-f_e-Cx+r, & \mbox{if } 0\leq k\leq n-n_s,\\
-f_e-C\(x+\frac{n+1-k}{\mu}\)+R-f_s, & \mbox{if } n-n_s+1\leq k \leq n,\\
-f_e-C\frac{(n+1)x}{k+1}+R-f_s, & \mbox{if } k\geq n+1.
\end{array}
\right.
\end{equation}
Indeed, to justify \eqref{conditional-U-n-given-X-K}, we notice that there are three cases regarding the tagged customer:
\begin{itemize}
\item[Case I:] At the beginning of the first observable period after her arrival, the tagged customer occupies a position greater than $n_s$.

In this case, the number of service completions $k$ is such that $n+1-k\geq n_s+1$, i.e., $k\leq n-n_s$. The tagged customer will renege at time $x$, so her sojourn time in the system is  the time till the first announcement $x$. Therefore, she will pay $f_e$ upon entrance, suffer waiting cost $Cx$ and receive the refund $r$ upon abandonment. So, we conclude the first branch of \eqref{conditional-U-n-given-X-K}.

\item[Case II:] At the beginning of the first observable period after her arrival, the tagged customer occupies a position less than or equal to $n_s$.

In this case, the number of service completions $k$ is such that $1\leq n+1-k \leq n_s$, i.e., $n-n_s+1\leq k \leq n$. The tagged customer will not renege and has to wait for the completion of $n+1-k$ service times after the beginning of the observable period that followed her arrival. Her mean sojourn time in the system will be $x+\frac{n+1-k}{\mu}$. Therefore, she will suffer waiting cost $C(x+\frac{n+1-k}{\mu})$ and the net service value will be $R-f_e-f_s$, so we conclude the second branch of \eqref{conditional-U-n-given-X-K}.
\item[Case III:] At the beginning of the first observable period after her arrival, the tagged customer has already been served.

In this case, the number of service completions $k$ is such that $k\geq n+1$. In the interval of length $x$, till the beginning of the observable period, there were $k$ events in the Poisson process of the service completions and the departure of the tagged customer corresponds to the $(n+1)$-th event of these $k$ events. But then, the conditional distribution of the departute time of the tagged customer given $x$ coincides with the $(n+1)$-order statistic of $k$ i.i.d. uniform random variables in $[0,x]$ (see e.g., Campbell's Theorem in Section 5.3 of Kulkarni \cite{Ku2010}). Its mean value is $\frac{(n+1)x}{k+1}$, so the waiting cost for the tagged customer is $C\frac{(n+1)x}{k+1}$, while the net service value is $R-f_e-f_s$. We conclude with the third branch of \eqref{conditional-U-n-given-X-K}.
\end{itemize}
We can now insert \eqref{conditional-U-n-given-X-K} into \eqref{mathcal-U-appendix} and we obtain
\begin{align*}
U(n;n_s)=&\int_0^{\infty}\sum_{k=0}^{n-n_s} (-f_e-Cx+r) e^{-\mu x} \frac{(\mu x)^k}{k!} \theta e^{-\theta x}dx\\
& +\int_0^{\infty}\sum_{k=n-n_s+1}^{n} (-f_e-Cx-C\frac{n+1-k}{\mu}+R-f_s) e^{-\mu x} \frac{(\mu x)^k}{k!} \theta e^{-\theta x}dx\nonumber\\
& +\int_0^{\infty}\sum_{k=n+1}^{\infty} (-f_e-C\frac{(n+1)x}{k+1}+R-f_s) e^{-\mu x} \frac{(\mu x)^k}{k!} \theta e^{-\theta x}dx.
\end{align*}
Evaluating the integrals (using the formula $\int_0^{\infty} x^he^{-\nu x}dx=\frac{h!}{\nu^{h+1}}$) and grouping similar terms yields
\begin{align*}
U(n;n_s)=&-f_e+r\sum_{k=0}^{n-n_s} \frac{\mu^k \theta}{(\mu+\theta)^{k+1}} + (R-f_s)\sum_{k=n-n_s+1}^{\infty} \frac{\mu^k \theta}{(\mu+\theta)^{k+1}}\nonumber\\
& -C\frac{1}{\mu}\sum_{k=1}^{n-n_s} \frac{k\mu^k\theta}{(\mu+\theta)^{k+1}}-C\frac{n+1}{\mu} \sum_{k=n-n_s+1}^{\infty} \frac{\mu^k\theta}{(\mu+\theta)^{k+1}},\; n\geq n_s.
\end{align*}
Evaluating the geometric sums and grouping equal terms yields after some simplifications the second branch of \eqref{conditional-expected-net-benefit-formula}.
\end{proof}\vspace{0.3cm}

To proceed with the computation of $\mathcal{U}(n_e,n_s,q)$ using \eqref{unconditional-benefit-01}, we need to compute the probabilities $\Pr[N^-_q=n]$. These probabilities can be computed by studying the dynamics of the system, when the population of customers follow the $(n_e,n_s,q)$-PES. Indeed, under this strategy, the state of the system is described by a continuous-time Markov chain (CTMC) $\{(N(t),I(t)):t\geq 0\}$, where $N(t)$ records the number of customers in the system at time $t$, while $I(t)$ denotes the mode of operation ($I(t)=1$ during observable periods and $I(t)=0$ during unobservable periods). The state-space of $\{(N(t),I(t))\}$ is $\mathcal{S}_{N,I}=\{(n,0):n\geq 0\}\cup\{(n,1):0\leq n\leq n_s\}$ and the corresponding transition diagram is shown in Figure \ref{Figure01}. 

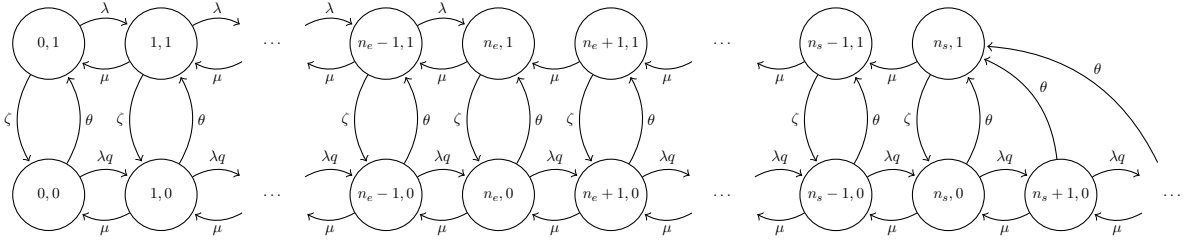
\begin{figure}[!htb]\centering
\begin{scaletikzpicturetowidth}{\linewidth}
\begin{tikzpicture}[state/.style={circle, draw, minimum size=1.8cm}, scale=\tikzscale, every node/.append style={transform shape}]

\node[state] 					(01) {$0,1$};
\node[state, right=of 01] 		(11) {$1,1$};

\node[circle, minimum size=1.8cm, right=of 11] 	(dots1) {$\dots$};
\node[state, right=of dots1] 	(ne-11) {$n_e-1,1$};
\node[state, right=of ne-11] 	(ne1) {$n_e,1$};
\node[state, right=of ne1] 		(ne+11) {$n_e+1,1$};

\node[circle, minimum size=1.8cm, right=of ne+11] 	(dots2) {$\dots$};
\node[state, right=of dots2] 	(ns-11) {$n_s-1,1$};
\node[state, right=of ns-11] 	(ns1) {$n_s,1$};

\node[state, below=2cm of 01] (00) {$0,0$};
\node[state, right=of 00] 		(10) {$1,0$};

\node[circle, minimum size=1.8cm, right=of 10] 	(dots3) {$\dots$};
\node[state, right=of dots3] 	(ne-10) {$n_e-1,0$};
\node[state, right=of ne-10] 	(ne0) {$n_e,0$};
\node[state, right=of ne0] 	(ne+10) {$n_e+1,0$};

\node[circle, minimum size=1.8cm, right=of ne+10] 	(dots4) {$\dots$};
\node[state, right=of dots4] 	(ns-10) {$n_s-1,0$};
\node[state, right=of ns-10] 	(ns0) {$n_s,0$};
\node[state, right=of ns0] 	(ns+10) {$n_s+1,0$};
\node[circle, minimum size=1.8cm, right=of ns+10] 	(dots5) {$\dots$};

\draw[every loop, auto=left, bend left] 
(01) 		edge node {$\lambda$} 		(11)
(11) 		edge node {$\lambda$} 		(dots1)
(dots1) 		edge node {$\lambda$} 		(ne-11)
(ne-11) 		edge node {$\lambda$} 		(ne1)

(00) 		edge node {$\lambda q$} 	(10)
(10) 		edge node {$\lambda q$} 	(dots3)
(dots3) 		edge node {$\lambda q$} 	(ne-10)
(ne-10) 		edge node {$\lambda q$} 	(ne0)
(ne0) 		edge node {$\lambda q$} 	(ne+10)
(ne+10)		edge node {$\lambda q$} 	(dots4)
(dots4) 		edge node {$\lambda q$} 	(ns-10)
(ns-10) 		edge node {$\lambda q$} 	(ns0)
(ns0) 		edge node {$\lambda q$} 	(ns+10)
(ns+10) 		edge node {$\lambda q$} 	(dots5)

(11) 		edge node {$\mu$} 			(01)
(dots1) 		edge node {$\mu$} 			(11)
(ne-11) 		edge node {$\mu$} 			(dots1)
(ne1) 		edge node {$\mu$} 			(ne-11)
(ne+11) 	edge node {$\mu$} 			(ne1)
(dots2) 		edge node {$\mu$} 			(ne+11)
(ns-11) 		edge node {$\mu$} 			(dots2)
(ns1) 		edge node {$\mu$} 			(ns-11)

(10) 		edge node {$\mu$} 			(00)

(dots3) 		edge node {$\mu$} 			(10)
(ne-10) 		edge node {$\mu$} 			(dots3)
(ne0) 		edge node {$\mu$} 			(ne-10)
(ne+10) 	edge node {$\mu$} 			(ne0)
(dots4) 		edge node {$\mu$} 			(ne+10)
(ns-10) 		edge node {$\mu$} 			(dots4)
(ns0) 		edge node {$\mu$} 			(ns-10)
(ns+10) 		edge node {$\mu$} 			(ns0)
(dots5) 		edge node {$\mu$} 			(ns+10);

\draw[every loop, auto=right, bend right]
(01) 		edge node {$\zeta$}	(00)
(11) 		edge node {$\zeta$}	(10)

(ne-11) 		edge node {$\zeta$}	(ne-10)
(ne1) 		edge node {$\zeta$}	(ne0)
(ne+11) 	edge node {$\zeta$}	(ne+10)
(ns-11) 		edge node {$\zeta$}	(ns-10)
(ns1) 		edge node {$\zeta$}	(ns0)
(00) 		edge node {$\theta$} (01)
(10) 		edge node {$\theta$} (11)

(ne-10) 		edge node {$\theta$} (ne-11)
(ne0) 		edge node {$\theta$} (ne1)
(ne+10) 	edge node {$\theta$} (ne+11)
(ns-10) 		edge node {$\theta$} (ns-11)
(ns0) 		edge node {$\theta$} (ns1)
(ns+10) 		edge node {$\theta$} (ns1)
(dots5) 		edge node {$\theta$} (ns1);

\end{tikzpicture}
\end{scaletikzpicturetowidth}
\caption{Transition diagram of the system state, when the customers follow the $(n_e,n_s,q)$-PES.}\label{Figure01}
\end{figure}\noindent
Its transition rates are given by
\begin{equation*}
q_{(n,i)(m,j)}=\left\{
\begin{array}{lll}
\lambda q & \mbox{if } i=j=0, & n\geq 0,\;\; m=n+1,\\
\mu & \mbox{if } i=j=0, & n\geq 1,\;\; m=n-1,\\
\lambda & \mbox{if } i=j=1, & 0\leq n\leq n_e-1,\;\; m=n+1,\\
\mu & \mbox{if } i=j=1, & 1\leq n\leq n_s,\;\; m=n-1,\\
\theta & \mbox{if } i=0, j=1, & 0\leq n\leq n_s-1, \;\; m=n,\\
\theta & \mbox{if } i=0, j=1, & n\geq n_s, \;\; m=n_s,\\
\zeta & \mbox{if } i=1, j=0, & 0\leq n\leq n_s, \;\; m=n,\\
0 & \mbox{otherwise}.
\end{array}
\right.
\end{equation*}
Denote by $(p(n,i): (n,i)\in\mathcal{S}_{N,I})$ its steady-state distribution. Then,
\begin{equation}\label{embedded-distribution}
\Pr[N^-_q=n]=\frac{p(n,0)}{\sum_{m=0}^{\infty} p(m,0)}.
\end{equation} 
Therefore, we need to compute $(p(n,i): (n,i)\in\mathcal{S}_{N,I})$. The distribution $(p(n,i))$ is the unique normalized solution of the following balance equations:
\begin{eqnarray}
(\lambda+\zeta) p(0,1) &=& \theta p(0,0) +\mu p (1,1),\label{bal-1}\\
(\lambda+\mu+\zeta) p(n,1) &=&\lambda p (n-1,1)+\theta p(n,0)+\mu p(n+1,1),\;\;\; 1\leq n\leq n_e-1,\label{bal-2}\\
(\mu+\zeta) p(n_e,1)&=&\lambda p(n_e-1,1)+\theta p(n_e,0)+\mu p(n_e+1,1),\label{bal-3}\\
(\mu+\zeta)p(n,1) &=&\theta p(n,0)+\mu p(n+1,1),\;\;\; n_e+1\leq n \leq n_s-1,\label{bal-4}\\
(\mu+\zeta) p(n_s,1)&=&\theta\sum_{n=n_s}^{\infty}p(n,0),\label{bal-5}\\
(\lambda q+\theta) p(0,0) &=& \zeta p (0,1)+\mu p(1,0),\label{bal-6}\\
(\lambda q+\mu+\theta) p(n,0)&=& \lambda q p(n-1,0)+\zeta p(n,1)+\mu p(n+1,0),\;\;\; 1\leq n\leq n_s,\label{bal-7}\\
(\lambda q+\mu+\theta) p(n,0)&=& \lambda q p(n-1,0)+\mu p(n+1,0),\;\;\; n\geq n_s+1.\label{bal-8}
\end{eqnarray}
Solving the system of the balance equations and the normalization equation 
\begin{equation}
\label{norm_cond}\sum_{(n,i)\in\mathcal{S}_{N,I}}p(n,i)=1,
\end{equation} 
and using \eqref{unconditional-benefit-01}, \eqref{conditional-expected-net-benefit-formula} and \eqref{embedded-distribution}, we can compute $\mathcal{U}(n_e,n_s,q)$ which is the key quantity for deriving the customer equilibrium behavior. The system of the balance equations and the normalization equation is quite involved. However, it can be reduced easily to a finite linear system because of the homogeneous 1-dimensional nature of the model for $n\geq n_s+1$. Indeed, \eqref{bal-8} is a homogeneous linear second-order difference equation. Therefore, its solution is
\begin{equation}\label{p-n-0-general}
p(n,0)=c_+\rho_+^n+c_-\rho_-^n,\;\;\; n\geq n_s,
\end{equation}
where $\rho_+$ and $\rho_-$ are the roots of the corresponding characteristic equation
\begin{equation}
\mu x^2-(\lambda q+\mu+\theta)x+\lambda q=0\label{characteristic-equation}
\end{equation}
and $c_+$, $c_-$ constants to be determined (see e.g., Section 2.3 in Elaydi \cite{El1996}). Therefore,
\begin{equation}
\rho_{+,-}=\frac{\lambda q +\mu +\theta\pm \sqrt{(\lambda q +\mu +\theta)^2-4\lambda q \mu}}{2\mu}.
\end{equation}
It is easy now to see that $\rho_+>1$, whereas $0<\rho_-<1$. Because of the normalization equation, $\sum_{n=n_s}^{\infty} p(n,0)<\infty$, so necessarily the coefficient $c_+$ of $\rho_+$ in \eqref{p-n-0-general} should be 0. Hence, \eqref{p-n-0-general} becomes
\begin{equation}\label{p-n-0-general-2}
p(n,0)=c_-\rho_-^n=\( \frac{\lambda q +\mu +\theta - \sqrt{(\lambda q +\mu +\theta)^2-4\lambda q \mu}}{2\mu}\)^np(n_s,0),\;\;\; n\geq n_s.
\end{equation}
Now, equation \eqref{bal-5} is written as 
\begin{equation}
(\mu+\zeta) p(n_s,1)=\frac{\theta}{1-\rho_{-}}p(n_s,0),\label{bal-5'}
\end{equation}
and equation \eqref{bal-7} for $n=n_s$ reduces to 
\begin{equation*}
(\lambda q+\mu+\theta) p(n_s,0)= \lambda q p(n_s-1,0)+\zeta p(n_s,1)+\mu\rho_{-} p(n_s,0).
\end{equation*}
The latter can be also written as
\begin{equation}
( \frac{\theta}{1-\rho_{-}}+\mu) p(n_s,0)=\zeta p(n_s,1)+\lambda q p(n_s-1,0), \label{bal-7'}
\end{equation}
using that $\rho_{-}$ satisfies \eqref{characteristic-equation}. Now, equations \eqref{bal-1}-\eqref{bal-4}, \eqref{bal-5'}, \eqref{bal-6}, \eqref{bal-7} for $1\leq n\leq n_s-1$  and \eqref{bal-7'} show that $(p(n,i): i=0,1 \mbox{ and } 0\leq n\leq n_s)$ satisfies the balance equations of the finite non-homogeneous Quasi-Birth-Death (QBD) process that results from the original chain in Figure \ref{Figure01}, when the states $(n,0)$ for $n\geq n_s+1$ are omitted and the rate from $(n_s,0)$ to $(n_s,1)$ becomes $\theta/(1-\rho_{-})$. Indeed, this finite QBD process, with transition rate diagram given in Figure \ref{Figure02}, is the censored process that results from the original chain observed only while it stays in states with $n\leq n_s$ (for details see \cite{LaRa1999}, Chapter 5, in particular Section 5.5). Hence, $(p(n,i): i=0,1 \mbox{ and } 0\leq n\leq n_s)$ can be effectively computed up to a normalization constant by using any general algorithm for the computation of the steady-state distributions of finite QBD processes (see e.g., \cite{LaRa1999}, Chapter 12 or \cite{ArGo2008}, Section 7.2.1). Then, the remaining steady-state probabilities $p(n,0)$, $n\geq n_s+1$, are computed up to the same normalization constant by \eqref{p-n-0-general-2}. Finally, the normalization constant is computed using the normalization equation \eqref{norm_cond}. Note that the algorithms for the computation of the steady-state probabilities of a finite QBD are very fast since they are of block-Gaussian elimination type that exploit the block-tridiagonal form of the transition rate matrix. 

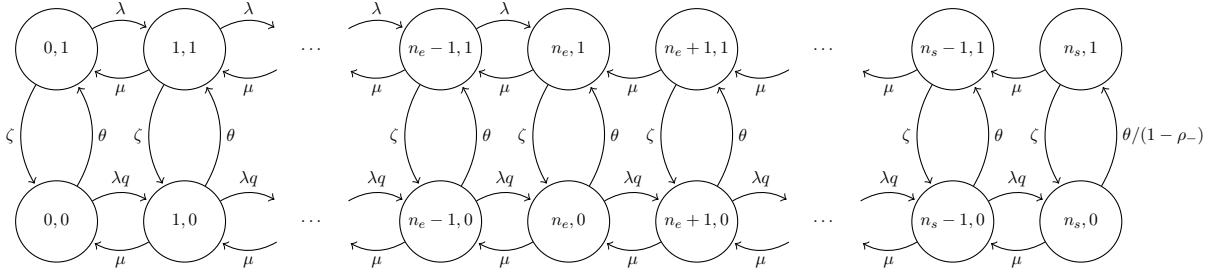
\begin{figure}[!htb]\centering
\begin{scaletikzpicturetowidth}{\textwidth}
\begin{tikzpicture}[state/.style={circle, draw, minimum size=1.8cm}, scale=\tikzscale, every node/.append style={transform shape}]

\node[state] 					(01) {$0,1$};
\node[state, right=of 01] 		(11) {$1,1$};

\node[circle, minimum size=1.8cm, right=of 11] 	(dots1) {$\dots$};
\node[state, right=of dots1] 	(ne-11) {$n_e-1,1$};
\node[state, right=of ne-11] 	(ne1) {$n_e,1$};
\node[state, right=of ne1] 		(ne+11) {$n_e+1,1$};

\node[circle, minimum size=1.8cm, right=of ne+11] 	(dots2) {$\dots$};
\node[state, right=of dots2] 	(ns-11) {$n_s-1,1$};
\node[state, right=of ns-11] 	(ns1) {$n_s,1$};

\node[state, below=2cm of 01] (00) {$0,0$};
\node[state, right=of 00] 		(10) {$1,0$};

\node[circle, minimum size=1.8cm, right=of 10] 	(dots3) {$\dots$};
\node[state, right=of dots3] 	(ne-10) {$n_e-1,0$};
\node[state, right=of ne-10] 	(ne0) {$n_e,0$};
\node[state, right=of ne0] 	(ne+10) {$n_e+1,0$};

\node[circle, minimum size=1.8cm, right=of ne+10] 	(dots4) {$\dots$};
\node[state, right=of dots4] 	(ns-10) {$n_s-1,0$};
\node[state, right=of ns-10] 	(ns0) {$n_s,0$};

\draw[every loop, auto=left, bend left] 
(01) 		edge node {$\lambda$} 		(11)
(11) 		edge node {$\lambda$} 		(dots1)
(dots1) 		edge node {$\lambda$} 		(ne-11)
(ne-11) 		edge node {$\lambda$} 		(ne1)

(00) 		edge node {$\lambda q$} 	(10)
(10) 		edge node {$\lambda q$} 	(dots3)
(dots3) 		edge node {$\lambda q$} 	(ne-10)
(ne-10) 		edge node {$\lambda q$} 	(ne0)
(ne0) 		edge node {$\lambda q$} 	(ne+10)
(ne+10)		edge node {$\lambda q$} 	(dots4)
(dots4) 		edge node {$\lambda q$} 	(ns-10)
(ns-10) 		edge node {$\lambda q$} 	(ns0)

(11) 		edge node {$\mu$} 			(01)
(dots1) 		edge node {$\mu$} 			(11)
(ne-11) 		edge node {$\mu$} 			(dots1)
(ne1) 		edge node {$\mu$} 			(ne-11)
(ne+11) 	edge node {$\mu$} 			(ne1)
(dots2) 		edge node {$\mu$} 			(ne+11)
(ns-11) 		edge node {$\mu$} 			(dots2)
(ns1) 		edge node {$\mu$} 			(ns-11)

(10) 		edge node {$\mu$} 			(00)
(dots3) 		edge node {$\mu$} 			(10)
(ne-10) 		edge node {$\mu$} 			(dots3)
(ne0) 		edge node {$\mu$} 			(ne-10)
(ne+10) 	edge node {$\mu$} 			(ne0)
(dots4) 		edge node {$\mu$} 			(ne+10)
(ns-10) 		edge node {$\mu$} 			(dots4)
(ns0) 		edge node {$\mu$} 			(ns-10)
;

\draw[every loop, auto=right, bend right]
(01) 		edge node {$\zeta$}	(00)
(11) 		edge node {$\zeta$}	(10)

(ne-11) 		edge node {$\zeta$}	(ne-10)
(ne1) 		edge node {$\zeta$}	(ne0)
(ne+11) 	edge node {$\zeta$}	(ne+10)
(ns-11) 		edge node {$\zeta$}	(ns-10)
(ns1) 		edge node {$\zeta$}	(ns0)
(00) 		edge node {$\theta$} (01)
(10) 		edge node {$\theta$} (11)

(ne-10) 		edge node {$\theta$} (ne-11)
(ne0) 		edge node {$\theta$} (ne1)
(ne+10) 	edge node {$\theta$} (ne+11)
(ns-10) 		edge node {$\theta$} (ns-11)
(ns0) 		edge node {$\theta/(1-\rho_{-})$} (ns1)
;

\end{tikzpicture}
\end{scaletikzpicturetowidth}
\caption{Transition diagram of the censored process in the set of states with $n\leq n_s$.}\label{Figure02}
\end{figure}\noindent
We now proceed to obtain a formula for $\mathcal{U}(n_e,n_s,q)$. By \eqref{unconditional-benefit-01} and \eqref{embedded-distribution}, we have that 
\begin{eqnarray}
\mathcal{U}(n_e,n_s,q)&=& \sum_{n=0}^{\infty} \frac{p(n,0)}{\sum_{m=0}^{\infty}p(m,0)}U(n;n_s)=\frac{\zeta+\theta}{\zeta}\sum_{n=0}^{\infty} p(n,0) U(n;n_s),\label{mathcal-U-proof-1}
\end{eqnarray}
because $\{I(t)\}$ is a 2-state CTMC with rate $q_{01}=\theta$ and $q_{10}=\zeta$ and so we conclude that $\sum_{m=0}^{\infty}p(m,0)=\frac{\zeta}{\zeta+\theta}$. To evaluate the sum in \eqref{mathcal-U-proof-1}, we decompose it in two sums according to the two branches of \eqref{conditional-expected-net-benefit-formula}. We have:
\begin{align}
\sum_{n=0}^{n_s-1} p(n,0) U(n;n_s)&=\sum_{n=0}^{n_s-1} p(n,0) \left( R-f_e-f_s-\frac{C(n+1)}{\mu} \right),\label{sum-1-uncond-exp-ben}
\end{align}
\vspace*{-1cm}
\begin{align}
\sum_{n=n_s}^{\infty} p(n,0) U(n;n_s)&=\sum_{n=n_s}^{\infty} p(n,0)\left( r_a-f_e-\frac{C}{\theta} \right)\nonumber\\
&+\sum_{n=n_s}^{\infty} p(n,0) \left( R-r_a-f_s-\frac{Cn_s}{\mu}+\frac{C}{\theta} \right) \left( \frac{\mu}{\mu+\theta} \right)^{n+1-n_s}.\label{sum-3-uncond-exp-ben}
\end{align}
The right-hand sides of \eqref{sum-1-uncond-exp-ben}-\eqref{sum-3-uncond-exp-ben} can be written more compactly in terms of the following partial generating functions of the steady-state distribution $(p(n,i): (n,i)\in\mathcal{S}_{N,I})$, that  correspond to the various groups of states that appear in the transition diagram (except from the state $(n_s,1)$ which forms a group by itself):
\begin{align}
P_{0a}(z)&=\sum_{n=0}^{n_e-1} p(n,0)z^n,\;\; P_{0b}(z)=\sum_{n=n_e}^{n_s-1} p(n,0)z^{n-n_e},\;\;
P_{0c}(z)=\sum_{n=n_s}^{\infty} p(n,0)z^{n-n_s}\label{gener-functions-0}\\
P_{1a}(z)&=\sum_{n=0}^{n_e-1} p(n,1)z^n,\;\; P_{1b}(z)=\sum_{n=n_e}^{n_s-1} p(n,1)z^{n-n_e}.\label{gener-functions-1}
\end{align}
Indeed, equations  \eqref{sum-1-uncond-exp-ben}-\eqref{sum-3-uncond-exp-ben} assume the form
\begin{align}
\sum_{n=0}^{n_s-1} p(n,0) U(n;n_s)
=&\left( R-f_e-f_s-\frac{C}{\mu} \right)P_{0a}(1)-\frac{C}{\mu}P_{0a}'(1)\nonumber\\
& \;\;\;+\left( R-f_e-f_s-\frac{C}{\mu} \right)P_{0b}(1)-\frac{C}{\mu}P_{0b}'(1)-\frac{Cn_e}{\mu}P_{0b}(1),\label{sum-1-uncond-exp-ben-gf}\\
\sum_{n=n_s}^{\infty} p(n,0) U(n;n_s)
=&\left( r_a-f_e-\frac{C}{\theta} \right) P_{0c}(1)\nonumber\\
&\;\;\;+\left( R-r_a-f_s-\frac{Cn_s}{\mu}+\frac{C}{\theta} \right) \frac{\mu}{\mu+\theta}P_{0c}\left( \frac{\mu}{\mu+\theta} \right).\label{sum-3-uncond-exp-ben-gf}
\end{align}
Combining \eqref{mathcal-U-proof-1} with \eqref{sum-1-uncond-exp-ben-gf}-\eqref{sum-3-uncond-exp-ben-gf} yields an expression for $\mathcal{U}(n_e,n_s,q)$ which is reported in the following Theorem.
\begin{thm}\label{thm:unconditional}
The unconditional expected net benefit of an arriving customer if she decides to join, given that the population of the customers follow the $(n_e,n_s,q)$-PES is given by
\begin{align*}
\mathcal{U}(n_e,n_s,q)=\frac{\zeta+\theta}{\zeta}&\left[(R-f_e-f_s-\frac{C}{\mu}\right) \left(P_{0a}(1)+P_{0b}(1))-\frac{Cn_e}{\mu}P_{0b}(1) \right.\\
&\;\;\;\;+\left(r_a-f_e-\frac{C}{\theta}\right) P_{0c}(1) -\frac{C}{\mu} \left(P_{0a}'(1)+P_{0b}'(1)\right)\\
&\;\;\;\;\left. +\left( R-r_a-f_s-\frac{Cn_s}{\mu}+\frac{C}{\theta}\right)\frac{\mu}{\mu+\theta}P_{0c}\left( \frac{\mu}{\mu+\theta}\right)\right].
\end{align*} 
\end{thm}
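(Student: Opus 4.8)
The statement is in essence an assembly of the three identities already established above, so the plan is to combine them and collect terms. I would start from \eqref{mathcal-U-proof-1}, which reduces the unconditional net benefit to $\frac{\zeta+\theta}{\zeta}\sum_{n=0}^{\infty} p(n,0)\,U(n;n_s)$; this reduction itself rests on the observation that the mode process $\{I(t)\}$ is a two-state CTMC with rates $\theta,\zeta$, whence $\sum_{m} p(m,0)=\zeta/(\zeta+\theta)$. I would then substitute the two-branch formula \eqref{conditional-expected-net-benefit-formula} for $U(n;n_s)$ and split the infinite sum at $n=n_s$, handling the two branches separately.

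For the low-occupancy branch $0\le n<n_s$ the summand $R-f_e-f_s-C(n+1)/\mu$ is affine in $n$, so the task is to express $\sum_{n=0}^{n_s-1}p(n,0)$ and $\sum_{n=0}^{n_s-1}(n+1)p(n,0)$ through the partial generating functions. The constant part gives $(R-f_e-f_s)(P_{0a}(1)+P_{0b}(1))$. The linear part is where the only real care is needed: because $P_{0a}$ is defined with exponent $z^{n}$ but $P_{0b}$ with the shifted exponent $z^{n-n_e}$, the derivative $P_{0b}'(1)$ returns $\sum_{n=n_e}^{n_s-1}(n-n_e)p(n,0)$ rather than $\sum n\,p(n,0)$. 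Writing $n+1=n+1$ on $[0,n_e-1]$ and $n+1=(n-n_e)+(n_e+1)$ on $[n_e,n_s-1]$ recovers the missing offset and produces the correction term $-\frac{Cn_e}{\mu}P_{0b}(1)$. Collecting yields exactly \eqref{sum-1-uncond-exp-ben-gf}.

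For the high-occupancy branch $n\ge n_s$ the summand splits into a constant piece $r-f_e-C/\theta$ and a geometric piece with ratio $\mu/(\mu+\theta)$ and exponent $n+1-n_s$. The constant piece contributes $(r-f_e-C/\theta)P_{0c}(1)$. For the geometric piece I would factor out one power of $\mu/(\mu+\theta)$ so that the remaining exponent $n-n_s$ matches the native shift in the definition of $P_{0c}$; the sum then collapses to $\frac{\mu}{\mu+\theta}P_{0c}\!\big(\frac{\mu}{\mu+\theta}\big)$, giving \eqref{sum-3-uncond-exp-ben-gf}. Adding the two branch contributions and multiplying by the prefactor $(\zeta+\theta)/\zeta$ yields the stated expression.

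There is no genuine obstacle here --- the heavy lifting (the conditional payoff formula of Theorem \ref{conditional-expected-net-benefit-thm} and the QBD reduction of the steady-state distribution) was already done. The only point that invites arithmetic error is keeping the three exponent conventions straight: $P_{0a}$ is anchored at $0$, $P_{0b}$ at $n_e$, and $P_{0c}$ at $n_s$. These shifts are precisely what generate the correction term $-\frac{Cn_e}{\mu}P_{0b}(1)$ and the leading factor $\mu/(\mu+\theta)$ in the final formula, so I would double-check the index bookkeeping at those two places and otherwise treat the proof as a direct substitution.
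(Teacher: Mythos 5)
Your proposal is correct and follows exactly the paper's own derivation: the reduction via $\sum_m p(m,0)=\zeta/(\zeta+\theta)$ from the two-state mode process, the split of the sum at $n_s$ according to the two branches of $U(n;n_s)$, and the index-shift bookkeeping that produces the $-\frac{Cn_e}{\mu}P_{0b}(1)$ correction and the leading factor $\frac{\mu}{\mu+\theta}$ in front of $P_{0c}\left(\frac{\mu}{\mu+\theta}\right)$. Nothing is missing; this is the same proof the paper gives in the discussion preceding the theorem.
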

\noindent
The probability generating functions that are needed to evaluate $\mathcal{U}(n_e,n_s,q)$ can be computed either using the steady-state probabilities that are obtained via the finite QBD approach that we described above, or directly using a generating function approach. The latter approach is interesting and efficient, but quite involved. So we describe it in detail in the Appendix of the paper. Because of its complexity, it may be preferable than the former approach only when $n_s$ is large, in which case the finite QBD approach is computationally costly. Having determined the unconditional expected net benefit function $\mathcal{U}(n_e,n_s,q)$, we can now proceed towards the characterization and computation of the equilibrium strategies.

\section{Monotonicity properties and equilibrium behavior}\label{monotonicity-and-equilibrium}

In this section, we characterize the equilibrium customer behavior using the performance evaluation results for the model under an $(n_e,n_s,q)$-PES that were reported in Section \ref{Performance-evaluation}. We first derive several monotonicity properties of the model associated with the functions $U(n;n_s)$ and $\mathcal{U}(n_e,n_s,q)$ which are crucial for the study of the equilibrium customer behavior.

\begin{prop}\label{monotonicity-with-respect-to-q}
The steady-state number, $N^{-}_q$, of customers in the system at arrival instants during unobservable periods, when the $(n_e,n_s,q)$-PES is followed by the customer population, is stochastically increasing in $q$.
\end{prop}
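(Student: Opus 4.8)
The distribution of $N^-_q$ is the conditional law of the queue length given that the system is unobservable, namely $\Pr[N^-_q=n]=p(n,0)/\sum_m p(m,0)$ by \eqref{embedded-distribution}. Hence the claim amounts to showing that for every fixed $k$ the normalized tail $\frac{1}{\sum_m p(m,0)}\sum_{n\geq k}p(n,0)$ is nondecreasing in $q$. The plan is to establish this through a monotone coupling of two copies of the CTMC of Figure \ref{Figure01}, driven by common randomness but carrying joining probabilities $q_1<q_2$, and to exploit the fact that the mode process $\{I(t)\}$ is an autonomous two-state CTMC with rates $\theta$ and $\zeta$ that does \emph{not} depend on $q$.

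First I would start both chains from the common state $(0,0)$ and drive their mode components by a single pair of clocks (rate $\theta$ for switches $0\to1$, rate $\zeta$ for $1\to0$), so that $I^{(1)}(t)=I^{(2)}(t)$ for all $t$. I would then couple the queue-length dynamics so as to maintain $N^{(1)}(t)\leq N^{(2)}(t)$: service completions in either mode are governed by a common rate-$\mu$ clock acting through the nondecreasing map $n\mapsto(n-1)^+$; mode-$1$ arrivals by a common rate-$\lambda$ clock acting through the nondecreasing map $n\mapsto n+1$ for $n<n_e$ and $n\mapsto n$ otherwise; and the reset at a switch $0\to1$ through the nondecreasing map $n\mapsto\min(n,n_s)$. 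The only rate at which the two chains differ is the unobservable-mode arrival rate $\lambda q$, which I would handle by the standard device of a common rate-$\lambda q_1$ up-clock that increments both chains together and an independent rate-$\lambda(q_2-q_1)$ up-clock that increments only the second chain. Each listed map is monotone and the extra clock only pushes $N^{(2)}$ higher, so the order $N^{(1)}\leq N^{(2)}$ together with $I^{(1)}=I^{(2)}$ is preserved by every transition, hence for all $t$.

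Since the chain is positive recurrent for every $q\in[0,1]$ (the switch to the observable mode occurs at the positive rate $\theta$ and caps the queue at $n_s$, ruling out instability even when $\lambda q>\mu$; this is reflected in $0<\rho_-<1$), both marginals converge to their stationary laws. Because $I^{(1)}(t)=I^{(2)}(t)$ and $N^{(1)}(t)\leq N^{(2)}(t)$ almost surely, for each $k$ the event $\{N^{(1)}(t)\geq k,\,I^{(1)}(t)=0\}$ is contained in $\{N^{(2)}(t)\geq k,\,I^{(2)}(t)=0\}$; letting $t\to\infty$ gives $\sum_{n\geq k}p^{(q_1)}(n,0)\leq\sum_{n\geq k}p^{(q_2)}(n,0)$. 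Finally, the autonomy of $\{I(t)\}$ forces $\sum_m p^{(q)}(m,0)=\zeta/(\zeta+\theta)$ for both values of $q$, so dividing the tail inequality by this common constant yields $\Pr[N^-_{q_1}\geq k]\leq\Pr[N^-_{q_2}\geq k]$ for all $k$, which is exactly the asserted stochastic monotonicity.

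The delicate point is not any single computation but the simultaneous bookkeeping of the two coordinates: the coupling must preserve the order in the $N$-coordinate while keeping the $I$-coordinate identical, and this works precisely because the mode process is $q$-independent and every threshold-dependent map (the cap $n\mapsto\min(n,n_s)$ at a mode switch and the blocked arrival below $n_e$) is monotone. Verifying monotonicity of these truncated maps, and checking that conditioning on $\{I=0\}$ does not disturb the ordering since $\Pr[I=0]$ is the same constant for both chains, are the steps I would treat with the most care.
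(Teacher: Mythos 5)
Your proof is correct and takes essentially the same route as the paper's: a monotone sample-path coupling of the two chains under a common mode process and a common rate-$\mu$ service clock, with the only cosmetic differences being that you realize the arrival-rate gap by superposing a common rate-$\lambda q_1$ clock with an independent rate-$\lambda(q_2-q_1)$ clock (the paper instead thins a single rate-$\lambda q_2$ stream with probability $q_1/q_2$), and that you pass from the joint stationary tails to the conditional law of $N^-_q$ via the $q$-independent constant $\Pr[I=0]=\zeta/(\zeta+\theta)$ together with \eqref{embedded-distribution}, where the paper invokes PASTA at that step. Both arguments yield the same sample-path ordering and hence the same conclusion $N^-_{q_1}\leq_{st}N^-_{q_2}$.
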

\begin{proof}
Consider two systems, 1 and 2, with identical operational parameters $\lambda$, $\mu$, $\theta$ and $\zeta$, and identical economic parameters $R$ and $C$, where the customers have adopted the thresholds $n_e$ and $n_s$ given by \eqref{obs-join-threshold} and \eqref{unobs-to-obs-stay-threshold}, respectively. The two systems differ only in the join probability $q$ when customers arrive during an unobservable period. More concretely, we suppose that the customers enter with probability $q^{(i)}$ when they arrive at an unobservable period of system $i$, for $i=1,2$. Suppose that $q^{(1)}<q^{(2)}$ and let $\{(N^{(i)}(t),C^{(i)}(t))\}$ be the CTMC describing system $i$, for $i=1,2$. We construct a coupling of the two processes that represent the states of the two systems as follows: 

The observable and unobservable periods alternate identically in the two systems. The service completions are identical in the two systems and are generated by the same Poisson process $\{M(t)\}$ with rate $\mu$ (when any of the two systems is empty, the Poisson generated events do not have any influence on the corresponding state $0$ of $\{N^{(i)}(t)\}$). The arrivals at system 2 are generated by a Poisson process $\{\Lambda^{(2)}(t)\}$ with rate $\lambda q^{(2)}$. On the other hand, for system 1, we assume that an arrival occurs at an event of the Poisson process $\{\Lambda^{(2)}(t)\}$ with probability $q^{(1)}/q^{(2)}$. This ensures that the arrivals at system 1 constitute a Poisson process $\{\Lambda^{(1)}(t)\}$ with rate $\lambda q^{(1)}$. 

A comparison of  the coupled realizations of the two processes $\{N^{(1)}(t)\}$ and $\{N^{(2)}(t)\}$  shows that both processes move one step to the left, when an event at the service completion Poisson process $\{M(t)\}$ occurs. Moreover, whenever a change happens in the informational process, from the observable to the unobservable mode, whichever of the processes $\{N^{(1)}(t)\}$ and $\{N^{(2)}(t)\}$ are above $n_s$ at the change instant moves to state $n_s$. If only one of them is above $n_s$, then only this process is influenced. Finally, when an event of the Poisson process $\{\Lambda^{(2)}(t)\}$ occurs, the process $\{N^{(2)}(t)\}$ certainly moves one step to the right, while the process $\{N^{(1)}(t)\}$ moves one step to the right with probability $q^{(1)}/q^{(2)}$. Therefore, a moment of reflection shows that if the processes $\{N^{(1)}(t)\}$ and $\{N^{(2)}(t)\}$ start from $n^{(1)}$ and $n^{(2)}$ customers respectively with $n^{(1)}\leq n^{(2)}$, then the sample-path of $\{N^{(1)}(t)\}$ remains `under' the corresponding sample-path  of $\{N^{(2)}(t)\}$ for all $t$. This proves that $\{N^{(1)}(t)\}\leq_{st}\{N^{(2)}(t)\}$. In particular, if we consider the two systems only during their unobservable periods, we conclude that the sample-path of $\{N^{(1)}(t)\}$ remains `under' the corresponding sample-path  of $\{N^{(2)}(t)\}$ so the corresponding steady-state distributions are also stochastically ordered. But during unobservable periods, the arrival processes at both systems are Poisson, so the steady-state distributions of the number of customers in continuous time and at arrival instants coincide (by applying the Poisson-Arrivals-See-Time-Averages (PASTA) result). Therefore, if we denote by $N^-_{q^{(i)}}$ the steady-state distribution of the number of customers in system $i$ at arrival instants when the system is unobservable, $i=1,2$, we conclude that $N^-_{q^{(1)}}\leq_{st} N^-_{q^{(2)}}$.
\end{proof}

\begin{prop}\label{monotonicity-conditional-benefit}
The conditional expected net benefit function $U(n;n_s)$ is strictly decreasing in $n$, for any fixed reneging threshold $n_s$.
\end{prop}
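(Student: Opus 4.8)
The plan is to work directly from the closed-form expression for $U(n;n_s)$ supplied by Theorem~\ref{conditional-expected-net-benefit-thm}, treating the two branches of \eqref{conditional-expected-net-benefit-formula} separately and then checking that monotonicity is preserved across their common boundary $n=n_s$. Since $n$ ranges over the non-negative integers, ``strictly decreasing'' means $U(n+1;n_s)<U(n;n_s)$ for every $n\geq 0$, so it suffices to establish strict decrease within each branch and a strict drop at the single transition step from $n=n_s-1$ to $n=n_s$.

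For the lower branch $0\leq n<n_s$, the expression $R-f_e-f_s-\frac{C(n+1)}{\mu}$ is affine in $n$ with slope $-\frac{C}{\mu}<0$, so strict monotonicity is immediate. For the upper branch $n\geq n_s$, write $U(n;n_s)=r-f_e-\frac{C}{\theta}+A\,\beta^{\,n+1-n_s}$ with $\beta=\frac{\mu}{\mu+\theta}\in(0,1)$ and $A=R-r-f_s-\frac{Cn_s}{\mu}+\frac{C}{\theta}$. The factor $\beta^{\,n+1-n_s}$ is strictly decreasing in $n$, so the branch is strictly decreasing precisely when $A>0$. This positivity is the one point that requires the structure of the model rather than a routine estimate: by the definition of the reneging threshold in \eqref{unobs-to-obs-stay-threshold}, $n_s=\left\lfloor\frac{\mu(R-r-f_s)}{C}\right\rfloor\leq\frac{\mu(R-r-f_s)}{C}$, whence $R-r-f_s-\frac{Cn_s}{\mu}\geq 0$, and adding the strictly positive term $\frac{C}{\theta}$ gives $A>0$. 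I expect this to be the main (and only) obstacle, in the sense that it is the step where one must invoke the floor definition of $n_s$; everything else is elementary.

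Finally I would verify the junction. Evaluating the lower branch at $n=n_s-1$ gives $U(n_s-1;n_s)=R-f_e-f_s-\frac{Cn_s}{\mu}$, and the upper branch at $n=n_s$ gives $U(n_s;n_s)=r-f_e-\frac{C}{\theta}+A\beta$. A short computation yields
\begin{equation*}
U(n_s-1;n_s)-U(n_s;n_s)=A-A\beta=A(1-\beta)=A\cdot\frac{\theta}{\mu+\theta}>0,
\end{equation*}
using $A>0$ from the previous step. Combining the strict decrease on each branch with this strict drop across $n=n_s$ shows that $U(\cdot;n_s)$ is strictly decreasing on all of $\{0,1,2,\dots\}$, completing the argument.
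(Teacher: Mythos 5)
Your proof is correct and follows essentially the same route as the paper's: strict decrease on each branch (with the positivity of the coefficient $R-r-f_s-\frac{Cn_s}{\mu}+\frac{C}{\theta}$ deduced from the floor definition of $n_s$), plus a check at the junction $n=n_s-1\to n_s$, which after simplification reduces to exactly the same positivity condition. Your explicit computation of the drop as $A(1-\beta)=A\cdot\frac{\theta}{\mu+\theta}$ is just a more detailed rendering of the paper's ``after some simplification'' step.
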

\begin{proof}{Proof} 
The top branch of $U(n;n_s)$ of \eqref{conditional-expected-net-benefit-formula} is obviously strictly decreasing in $n$. The bottom branch is also strictly decreasing in $n$. Indeed, by the definition of $n_s$ (see \eqref{unobs-to-obs-stay-threshold}), we have that $R-f_s-C\frac{n_s}{\mu}\geq r$, so the coefficient $R-r-f_s-\frac{Cn_s}{\mu}+\frac{C}{\theta}$ in the bottom branch of \eqref{conditional-expected-net-benefit-formula} is positive. Moreover, $( \frac{\mu}{\mu+\theta} )^{n+1-n_e}$ is strictly decreasing in $n$. 

It remains to show that $U(n;n_s)$ remains strictly decreasing at its turning point from the top branch to the bottom, i.e., that
\begin{equation*}
R-f_e-f_s-\frac{Cn_s}{\mu}> r-f_e-\frac{C}{\theta}+( R-r-f_s-\frac{Cn_s}{\mu}+\frac{C}{\theta}) \frac{\mu}{\mu+\theta}.
\end{equation*}
After some simplification, this is equivalently written as $R-r-f_s-\frac{Cn_s}{\mu}+\frac{C}{\theta}> 0$ which is valid by the definition of $n_s$. \end{proof}

\begin{prop}\label{monotonicity-unconditional-benefit}
The unconditional expected net benefit function $\mathcal{U}(n_e,n_s,q)$ is strictly decreasing in $q$, for fixed thresholds $n_e$ and $n_s$.
\end{prop}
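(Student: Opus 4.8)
The plan is to read the claim directly off the probabilistic representation \eqref{unconditional-benefit-01}, which exhibits the quantity of interest as an expectation,
\begin{equation*}
\mathcal{U}(n_e,n_s,q)=E[U(N^-_q;n_s)]=\sum_{n=0}^{\infty}\Pr[N^-_q=n]\,U(n;n_s),
\end{equation*}
of the fixed function $U(\cdot\,;n_s)$ against the law of $N^-_q$. The two ingredients are already available: by Proposition \ref{monotonicity-with-respect-to-q} the variable $N^-_q$ is stochastically increasing in $q$, while by Proposition \ref{monotonicity-conditional-benefit} the integrand $U(\cdot\,;n_s)$ is strictly decreasing in its first argument. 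Since the expectation of a non-increasing function taken against a stochastically larger distribution cannot increase, the weak monotonicity $\mathcal{U}(n_e,n_s,q^{(1)})\ge\mathcal{U}(n_e,n_s,q^{(2)})$ for $q^{(1)}<q^{(2)}$ follows at once from the standard characterization of the usual stochastic order.

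The substance of the argument is upgrading this to a strict inequality, and here I would reuse the explicit path-wise coupling constructed in the proof of Proposition \ref{monotonicity-with-respect-to-q}, in which system $1$ thins the arrival stream of system $2$ with probability $q^{(1)}/q^{(2)}$ so that $\{N^{(1)}(t)\}\le\{N^{(2)}(t)\}$ for every $t$. Passing to the stationary versions and restricting to unobservable periods, this yields a coupling $(\hat N_1,\hat N_2)$ with $\hat N_i\stackrel{d}{=}N^-_{q^{(i)}}$ and $\hat N_1\le\hat N_2$ almost surely (the transfer from the time-stationary to the arrival-instant law being justified by PASTA, exactly as in Proposition \ref{monotonicity-with-respect-to-q}). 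Under this coupling,
\begin{equation*}
\mathcal{U}(n_e,n_s,q^{(1)})-\mathcal{U}(n_e,n_s,q^{(2)})=E\!\left[U(\hat N_1;n_s)-U(\hat N_2;n_s)\right]\ge 0,
\end{equation*}
and because $U(\cdot\,;n_s)$ is \emph{strictly} decreasing, the integrand is strictly positive on $\{\hat N_1<\hat N_2\}$ and vanishes elsewhere.

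The main obstacle is therefore to show $\Pr[\hat N_1<\hat N_2]>0$, i.e.\ that the two coupled chains genuinely separate rather than remaining glued together. I would argue this directly: at each event of the dominating arrival process during an unobservable period, system $2$ advances while system $1$ fails to do so with probability $1-q^{(1)}/q^{(2)}>0$, producing a state with $N^{(2)}>N^{(1)}$; since such a configuration is reached and revisited in steady state, the stationary coupled pair satisfies $\Pr[\hat N_1<\hat N_2]>0$. A purely analytic alternative, should the coupling argument prove delicate to make rigorous, is to observe that the stationary tails obey $p(n,0)\propto\rho_-(q)^n$ with $\rho_-(q)$ strictly increasing in $q$ (a short computation on the roots of \eqref{characteristic-equation} gives $\tfrac{d}{dq}\rho_-(q)>0$), so the laws of $N^-_{q^{(1)}}$ and $N^-_{q^{(2)}}$ are distinct and hence strictly stochastically ordered. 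Either route forces the integrand above to have strictly positive expectation, giving
\begin{equation*}
\mathcal{U}(n_e,n_s,q^{(1)})>\mathcal{U}(n_e,n_s,q^{(2)})
\end{equation*}
and completing the proof.
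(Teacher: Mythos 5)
Your proposal is correct and follows essentially the same route as the paper: represent $\mathcal{U}(n_e,n_s,q)=E[U(N^-_q;n_s)]$ and combine Proposition \ref{monotonicity-with-respect-to-q} (stochastic monotonicity of $N^-_q$ in $q$) with Proposition \ref{monotonicity-conditional-benefit} (strict monotonicity of $U(\cdot\,;n_s)$ in $n$). The one place where you go beyond the paper is the strictness step: the paper's proof simply asserts $E[U(N^{-}_{q_1};n_s)]>E[U(N^{-}_{q_2};n_s)]$ for $q_1<q_2$, whereas you correctly point out that a stochastically increasing law paired with a strictly decreasing (bounded) integrand yields, by itself, only the weak inequality; strictness additionally requires that the laws of $N^-_{q_1}$ and $N^-_{q_2}$ genuinely differ. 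You fill this in two ways: (i) via the coupling of Proposition \ref{monotonicity-with-respect-to-q}, arguing that the coupled paths separate with positive probability, and (ii) analytically, noting that for $n\geq n_s$ the stationary tail ratio equals $\rho_-(q)$, the root of \eqref{characteristic-equation} in $(0,1)$, which is strictly increasing in $q$, so the two laws cannot coincide. Both routes are sound, though (ii) is the easier one to make fully rigorous: the positive-separation claim in (i), as stated, still needs a stationarity/positive-recurrence argument for the bivariate coupled chain, whereas (ii) reduces strictness to a one-line computation on the roots of the characteristic equation. In this respect your write-up is more complete than the paper's own one-line deduction.
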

\begin{proof}
For fixed $n_s$, we have that $U(n;n_s)$ does not depend on $q$, nor on $n_e$, so we can write \eqref{unconditional-benefit-01} as 
\begin{equation*}
\mathcal{U}(n_e,n_s,q)=E[U(N^{-}_{q};n_s)],
\end{equation*}
where $N^-_{q}$ is the random variable that was defined in the statement of Proposition \ref{monotonicity-with-respect-to-q}. Now, $U(n;n_s)$ is strictly decreasing in $n$ because of Proposition \ref{monotonicity-conditional-benefit} and $N^{-}_q$ is stochastically increasing in $q$ by Proposition \ref{monotonicity-with-respect-to-q}. Therefore, $q_1<q_2$ implies that  $\mathcal{U}(n_e,n_s,q_1)=E[U(N^{-}_{q_1};n_s)]>E[U(N^{-}_{q_2};n_s)]=\mathcal{U}(n_e,n_s,q_2)$.
\end{proof}

We are now ready to state and prove the characterization of equilibrium customer strategies.

\begin{thm}\label{thm:equilibrium}
An equilibrium strategy always exists and is unique. It is the $(n_e,n_s,q_e)$-PES with $n_e$, $n_s$ and $q_e$ given respectively from \eqref{obs-join-threshold}, \eqref{unobs-to-obs-stay-threshold} and 
\begin{equation*}
q_e=\left\{
\begin{array}{ll}
0 & \mbox{if } \mathcal{U}(n_e,n_s,0)\leq 0,\\
q_e^* & \mbox{if } \mathcal{U}(n_e,n_s,1)< 0 < \mathcal{U}(n_e,n_s,0),\\
1 & \mbox{if } \mathcal{U}(n_e,n_s,1)\geq 0,
\end{array}
\right.
\end{equation*}
where $q_e^*$ is the root of the equation $\mathcal{U}(n_e,n_s,q)=0$ with respect to $q$ in $(0,1)$ (which exists and is unique when $\mathcal{U}(n_e,n_s,1)< 0 < \mathcal{U}(n_e,n_s,0)$).
\end{thm}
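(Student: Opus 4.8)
The plan is to reduce the statement to a one-dimensional fixed-point problem in the joining probability $q$, since the thresholds $n_e$ and $n_s$ are already forced. First I would recall from Section~\ref{model-strategies} that, against \emph{any} population strategy, a tagged customer's decision at an observable arrival instant depends only on her own offered position through \eqref{obs-join-threshold}, and her stay/renege decision at a mode change depends only on her own position through \eqref{unobs-to-obs-stay-threshold}; hence $n_e$ and $n_s$ are dominant and every best response is necessarily an $(n_e,n_s,q')$-PES. Consequently, an $(n_e,n_s,q_e)$-PES is an equilibrium if and only if the scalar $q_e$ is a best response to itself.

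Next I would make this best-response condition explicit. Normalizing the payoff of balking to $0$, a tagged customer facing the population strategy $(n_e,n_s,q)$ strictly prefers to join when $\mathcal{U}(n_e,n_s,q)>0$, strictly prefers to balk when $\mathcal{U}(n_e,n_s,q)<0$, and is indifferent --- so that every $q'\in[0,1]$ is a best response --- when $\mathcal{U}(n_e,n_s,q)=0$. Reading off the self-consistency requirement $q'=q_e$ then gives: $q_e=0$ is an equilibrium iff $\mathcal{U}(n_e,n_s,0)\le 0$; $q_e=1$ is an equilibrium iff $\mathcal{U}(n_e,n_s,1)\ge 0$; and $q_e\in(0,1)$ is an equilibrium iff $\mathcal{U}(n_e,n_s,q_e)=0$.

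Existence and uniqueness then follow from the behavior of the map $q\mapsto\mathcal{U}(n_e,n_s,q)$ on $[0,1]$. I would first argue that this map is continuous: the steady-state vector $(p(n,i))$ is obtained, via the reduced finite QBD of Figure~\ref{Figure02}, as the normalized solution of a linear system whose coefficients --- including the modified rate $\theta/(1-\rho_-)$ --- depend continuously on $q$, and this system admits a unique normalized solution for every $q\in[0,1]$; hence each $p(n,0)$, and through Theorem~\ref{thm:unconditional} also $\mathcal{U}$, depends continuously on $q$. Combining continuity with the strict monotonicity of Proposition~\ref{monotonicity-unconditional-benefit} gives $\mathcal{U}(n_e,n_s,0)>\mathcal{U}(n_e,n_s,1)$ and partitions $[0,1]$ into exactly the three exhaustive, mutually exclusive regimes of the statement. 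In the first regime $\mathcal{U}<0$ on $(0,1]$, so no $q>0$ is an equilibrium and $q_e=0$ is the unique one; symmetrically, in the third regime $\mathcal{U}>0$ on $[0,1)$ forces the unique equilibrium $q_e=1$; in the intermediate regime the Intermediate Value Theorem yields a root $q_e^*\in(0,1)$ of $\mathcal{U}(n_e,n_s,\cdot)=0$, which strict monotonicity renders unique, while the strict signs of $\mathcal{U}$ away from $q_e^*$ (and at both endpoints) eliminate every other candidate.

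The substantive input that drives uniqueness --- the strict monotonicity of $\mathcal{U}$ in $q$ --- is already supplied by Propositions~\ref{monotonicity-with-respect-to-q}--\ref{monotonicity-unconditional-benefit}, so beyond continuity the remaining steps are routine game-theoretic bookkeeping. I therefore expect the only delicate point to be the continuity of $\mathcal{U}$ at the endpoints $q=0$ and $q=1$, where the underlying chain changes its set of positively recurrent states, together with the care needed to confirm that the three regimes are genuinely exhaustive and that no spurious equilibrium survives at $q=0$ or $q=1$.
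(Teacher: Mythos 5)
Your proposal is correct and follows essentially the same route as the paper: reduce the problem to the scalar best-response condition in $q$ (since the thresholds $n_e$, $n_s$ are forced), split into the three sign regimes of $\mathcal{U}(n_e,n_s,\cdot)$ at $q=0$ and $q=1$, and invoke the strict monotonicity from Proposition~\ref{monotonicity-unconditional-benefit} to get uniqueness of the interior root $q_e^*$. The only difference is that you explicitly supply the continuity of $q\mapsto\mathcal{U}(n_e,n_s,q)$ needed for the Intermediate Value Theorem, a point the paper's proof leaves implicit when it asserts that strict monotonicity alone yields existence of the root in $(0,1)$.
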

\begin{proof}{Proof}
We have already shown during the discussion in Section \ref{model-strategies}, that an equilibrium strategy is necessarily an $(n_e,n_s,q)$-PES with $n_e$ and $n_s$ given by \eqref{obs-join-threshold} and \eqref{unobs-to-obs-stay-threshold}, respectively. Assume that the population of customers follows the $(n_e,n_s,q)$-PES and consider a tagged customer. We have the following three cases:

\begin{itemize}
\item[Case I:] $q=0$.

Then, the best response of the tagged customer against $(n_e,n_s,0)$ is the same strategy, if and only if, the customer prefers to balk if she finds the system at the unobservable mode upon arrival. Therefore, the $(n_e,n_s,0)$-PES is an equilibrium strategy if and only if $\mathcal{U}(n_e,n_s,0)\leq 0$.

\item[Case II:] $q\in(0,1)$.

Then, the best response of the tagged customer against $(n_e,n_s,q)$ is the same strategy, if and only if, the customer is indifferent between joining and  balking if she finds the system at the unobservable mode upon arrival. Therefore, the $(n_e,n_s,q)$-PES is an equilibrium strategy if and only if $\mathcal{U}(n_e,n_s,q)= 0$. However, because of the strict monotonicity of $\mathcal{U}(n_e,n_s,q)$ with respect to $q$ (by Proposition \ref{monotonicity-unconditional-benefit}), we have that the  equation $\mathcal{U}(n_e,n_s,q)= 0$ has a solution in $(0,1)$ if and only if $\mathcal{U}(n_e,n_s,1)< 0 < \mathcal{U}(n_e,n_s,0)$ and in this case, it is unique.

\item[Case III:] $q=1$.

Then, the best response of the tagged customer against $(n_e,n_s,1)$ is the same strategy, if and only if, the customer prefers to join if she finds the system at the unobservable mode upon arrival. Therefore, the $(n_e,n_s,1)$-PES is an equilibrium strategy if and only if $\mathcal{U}(n_e,n_s,1)\geq 0$.
\end{itemize}\vspace{-0.3cm}
\end{proof}

Theorem~\ref{thm:equilibrium} has far-reaching implications in terms of the economic and operational analysis of the alternating information structure. By establishing existence of a unique equilibrium and by characterizing it in terms of the well behaved customer expected net benefit (cf. Theorem~\ref{thm:unconditional}), it essentially states that the current model remains tractable despite its increased complexity over other benchmark approaches, \cite{HuLiWa2018,BuEcVa2017}. This structure can be utilized to study the model's performance in equilibrium via numerical comparative statics on its operational and economic parameters.

\section{Effects of the alternating observational structure: Numerical experiments}\label{numerical-conclusions}

As previously mentioned, the alternation between observable and unobservable periods represents a number of different situations that arise in practice. These situations determine which parameters can be controlled by a decision maker and hence, the ways in which system performance can be improved. Our objective is to study such effects on strategic customer behavior and to derive managerial insight on optimizing system design.\par
Specifically, we are interested in the behavior of the equilibrium joining probability $q_e$, the equilibrium throughput of the system $\mu_e=\mu(1-p(0,0)-p(0,1))$ (i.e., the number of service completions per time unit) and the equilibrium social welfare
$$S_e=R\mu_e+r a_e-CE[N_{q_e}],$$
where $a_e=\sum_{n=n_s+1}^{\infty}(n-n_s)\theta p(n,0)$ is the mean abandonment (reneging) rate in equilibrium and $E[N_{q_e}]$ is the mean number of customers in the system given by
\begin{align*}
E[N_{q_e}]&=\sum_{n=0}^{n_s}np(n,1)+\sum_{n=0}^{\infty}np(n,0).
\end{align*}

For this analysis, we consider three sets of numerical experiments. The first set studies the effect of the fraction of time that the system is observable in Section~\ref{sub:set1}, whereas the second studies the effect of the duration of the unobservable periods in Section~\ref{sub:set2}. Finally, the third studies the influence of the percentage of the entrance fee which is refundable when a customer reneges (i.e., the fraction $r/f_e$) in Section~\ref{sub:set3}. 


\subsection{Fraction of time that the system is observable}\label{sub:set1}

First, we study the effect on strategic customer behavior of the fraction of time that the system is observable, $\gamma\in [0,1]$, when the mean information cycle of the system -- consisting of an unobservable and an observable period -- is kept fixed. To this end, for a given mean information cycle of the system $B$, we set $\theta=\frac{1}{(1-\gamma) B}$ and $\zeta=\frac{1}{\gamma B}$, so that $1/\theta+1/\zeta=B$, and let $\gamma$ vary in $[0,1]$. Our objective is to study whether there exists an ideal fraction of time, strictly between $0$ and $1$, for which the system should be observable given the duration $B$.\par
We first consider an instance with fixed mean information cycle, $B=0.1$ (high frequency of alternations), and provide the plots of the joining probability $q_e$, the throughput $\mu_e$ and the social welfare $S_e$, when the customers follow the equilibrium strategy, as functions of $\gamma$ in the three panels of Figure~\ref{Fig1}. In each panel, we plot three curves for the arrival rates $\lambda=0.8, 1.1$ and $2.3$, respectively. The rest of the parameters are kept fixed: the service rate is set $\mu=1$ and the economic parameters are $R= 4$, $C=1$, $f_e=f_s=0$ and $r=-30$.
\begin{figure}[!htb]
\centering
\includegraphics[width=\textwidth]{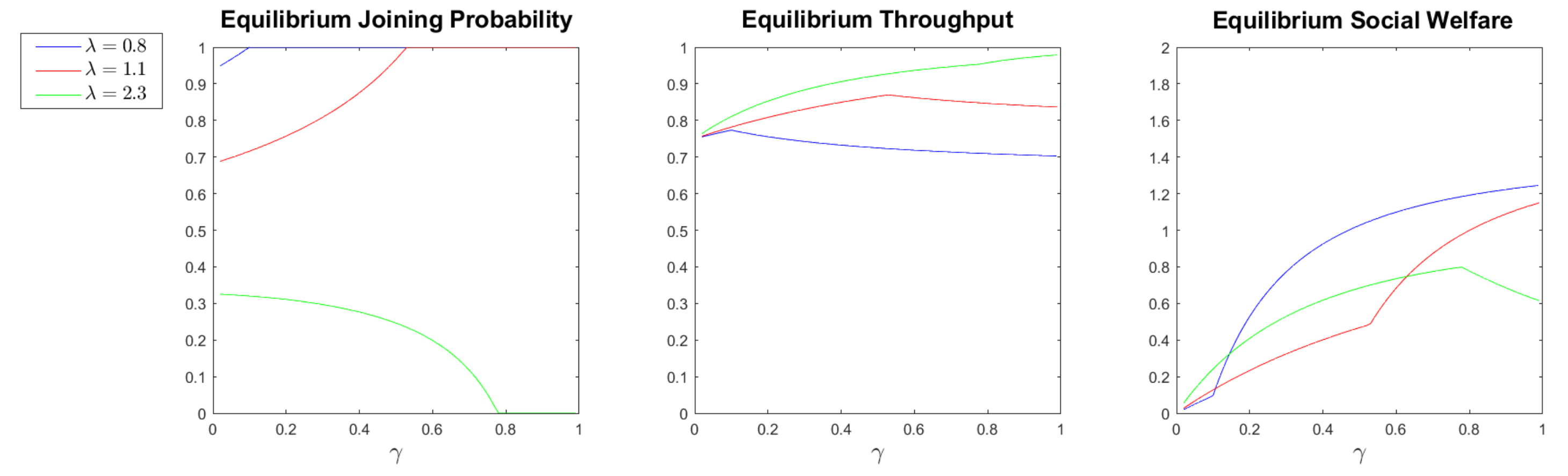}
\caption{Customer's joining probability, throughput and social welfare  with respect to $\gamma$ for $\lambda=0.8, 1.1, 2.3$ and for $B=0.1, \mu=1,\;R=4,\; C=1,\; f_e=f_s=0$ and $r=-30$.}
\label{Fig1}
\end{figure}
The choices $B=0.1$ and $r=-30$ represent that the alternations in the information periods are almost instantaneous and that reneging is prohibited. In this case, $\gamma$ corresponds to the fraction of customers that observe the queue length upon arrival. The rest of the parameters are the same as in Hu, Li and Wang \cite{HuLiWa2018}. The resulting plots of Figure~\ref{Fig1} coincide with their findings, cf. Figures 3, 4 and 5 in \cite{HuLiWa2018}, and confirm that their model can be derived as a limiting case of the alternating information structure. \par

The present model allows a greater degree of control on the fraction of time that the system is observable and hence, it is not surprising that the equilibrium throughput is usually a unimodal function of $\gamma$. This is in agreement with the findings in \cite{HuLiWa2018}, who proved that the equilibrium throughput and the equilibrium social welfare are in general unimodal and not monotonous in the fraction $\gamma$ of informed customers. Similarly, Chen and Frank \cite{ChFr2004} have shown that regarding equilibrium throughput maximization, the observable version of the M/M/1 queue is preferable for high values of $\lambda$, whereas the unobservable version is preferable for low values of $\lambda$. Indeed, when $\lambda$ is high, in the unobservable version no customer enters, whereas some customers do enter in the observable counterpart (those few that find the system in low congestion). The opposite happens when $\lambda$ is low, i.e., all customers enter in the unobservable case, whereas not all customers enter in the observable case.\par
Regulating $\gamma$ and keeping all other parameters fixed has multiple effects on the system. In case of low arrival rates $\lambda$, an increase in $\gamma$ increases the customer entrance probability for the unobservable periods, but it also increases the abandonment probability since the system passes quickly from the unobservable to the observable mode. This double effect is reversed in the case of high arrival rates. Moreover, tuning $\gamma$ changes the composition of the customers' population, which in turn, changes the join-or-balk game among the customers. The population consists of two different subpopulations of customers and the solution of the game becomes more intricate. \par
To understand the effect of $\gamma$ for different frequencies of alternation between observable and unobservable periods, we perform a second experiment with increasing values of the mean information cycle $B=0.1,10,100$. We set the arrival rate at $\lambda=1.1$ (also used by \cite{HuLiWa2018}) and keep the rest of the parameters as in Figure~\ref{Fig1}. The results are shown in Figure~\ref{Fig2}.
\begin{figure}[!htb]
\centering
\includegraphics[width=\textwidth]{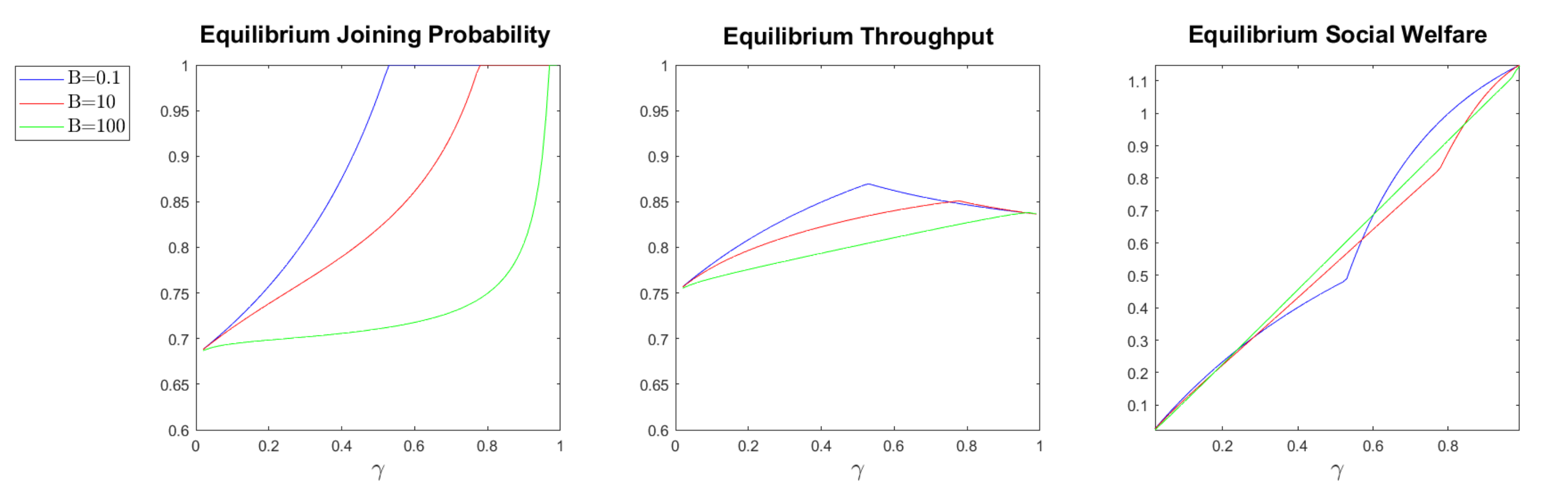}
\caption{Customer's joining probability, throughput and social welfare  with respect to $\gamma$ for $B=0.1,\;10,\;100$ and for $\lambda=1.1,\;\mu=1,\;R=4,\; C=1,\; f_e=f_s=0$ and $r=-30$.}
\label{Fig2}
\end{figure}
The main observation in Figure~\ref{Fig2} is that given sufficient control on the duration of the information cycle, the alternating information structure can lead to increased social welfare in comparison to the information heterogeneity of \cite{HuLiWa2018}. This is achieved for intermediate values of $\gamma$ for which the line for $B=0.1$ (\cite{HuLiWa2018}) lies below the lines of both $B=10$ and $B=100$ (longer information cycles and hence, lower frequency of alternations). To allow comparisons, reneging has been kept prohibited in all these plots by setting $r=-30$. However, allowing customers to renege can further improve system performance.\par
A second feature that is revealed by the plots of Figures~\ref{Fig1} and \ref{Fig2} is the concurrence of the value of $\gamma$ in which the equilibrium joining probability hits $1$, the mode in the equilibrium throughput and the tipping point in social welfare. This remains true for the social welfare if the joining probability hits instead $0$, but not for the equilibrium throughput which remains increasing, cf. plots for $\lambda=2.3$ in Figure~\ref{Fig1} (the terms \emph{increasing} and \emph{decreasing} should be understood in a weak manner, i.e., non-decreasing and non-increasing, respectively). The explanation is based again on the previous discussion: as $\gamma$ increases, it influences the joining and abandonment equilibrium probabilities towards the same direction (both probabilities increase or decrease). But when the joining probability hits its extreme value, $1$ or $0$, as $\gamma$ varies, the effect of further increasing $\gamma$ on the joining probability ceases to exist, whereas the effect on the abandonment probability continues. In short, Figures~\ref{Fig1} and \ref{Fig2} illustrate the following
\begin{itemize}
\item The equilibrium joining probability is increasing in $\gamma$ for low values of $\lambda$ and decreasing in $\gamma$ for high values of $\lambda$. The equilibrium throughput is an increasing or unimodal function of $\gamma$. Its mode coincides with the point at which $q_e$ reaches $1$. The equilibrium social welfare slope changes abruptly when $q_e$ reaches $1$. For certain values of $\gamma$, it is higher for information cycles of higher duration $B$.
\item The equilibrium joining probability is decreasing in $\lambda$, the equilibrium throughput is increasing in $\lambda$, whereas the equilibrium social welfare is non-monotonic in $\lambda$.
\item The equilibrium joining probability is decreasing in $B$, whereas the equilibrium throughput and the equilibrium social welfare are non-monotonic in $B$.
\end{itemize}

\subsection{Duration of unobservable periods}\label{sub:set2}
Next, we turn to the effect of the duration of the unobservable periods, as expressed by varying values of $\theta$, on strategic customer behavior. At the limiting case, in which the duration of the corresponding observable periods is very short (almost instantaneous), i.e., for large values of $\zeta$, the alternating information structure reduces to an \emph{announcement model} with announcement rate $\theta$. This is precisely the setting studied in Burnetas, Economou and Vasiliadis \cite{BuEcVa2017}. In their framework, \cite{BuEcVa2017} show that the equilibrium throughput is a non-monotonic function of the announcement rate $\theta$, when all other parameters are kept fixed. \par
To recover this result, in the first scenario, we let $\theta$ vary in $(0,10)$ for three different arrival rates $\lambda=7,10,40$ and select $\zeta=300$ to model very short observable periods (almost instantaneous announcements). In all cases, the economic parameters $R=5 $, $C=10$, and $f_e=f_s=r=0$ and the operational parameter $\mu=8$ have been kept fixed. The equilibrium perfomance measurers $q_e,\mu_e$ and $S_e$ are plotted as functions of $\theta$ in the three panels of Figure \ref{Fig4}. The three curves in each panel correspond to the different values of $\lambda$.
\begin{figure}[!htb]
\centering
\includegraphics[max width=\linewidth]{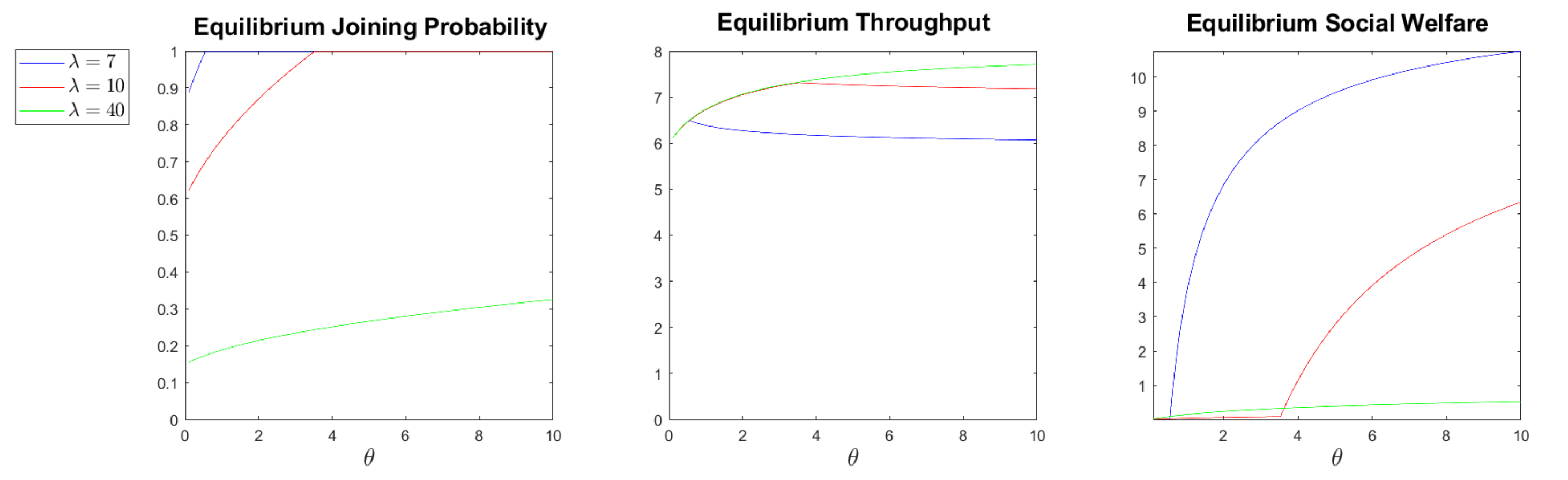}
\caption{Equilibrium joining probability, throughput and social welfare with respect to $\theta$, for $\lambda=7, 10, 40$, when $\zeta=300, \mu=8,\;R=5,\; C=10,\; f_e=f_s=r=0$.}
\label{Fig4}
\end{figure}

The most interesting finding is that the ideal rate $\theta$ for maximizing equilibrium throughput lies strictly between $0$ and $\infty$. This can be seen for the curves that correspond to $\lambda=7$ and $10$ and is also true for the curve with $\lambda=40$ which attains its mode outside the selected range of $\theta$. This happens because increasing $\theta$ has two opposing effects: On the one hand, it increases the equilibrium joining probability, because the uninformed customers become more willing to enter, knowing that they will be informed more quickly. On the other hand, it increases the reneging probability, because an uninformed customer who has joined the system during a high congestion period will abandon the system earlier. The trade-off between the two effects is not clear and this is the reason for the unimodality of the throughput. \par
The flexibility of the current information structure can be utilized to improve over the benchmark model of \cite{BuEcVa2017}. To see this, we consider a second experiment with decreased values of $\zeta=1,10,100$. Lower values of $\zeta$ correspond to longer observable periods. We set the arrival rate at $\lambda=40$, and keep the rest of the parameters as in Figure~\ref{Fig4}. We let $\theta$ to take values in $[0,200]$. The results on the equilibrium performance measures $q_e,\mu_e$ and $S_e$ are shown in the three panels of Figure~\ref{Fig3}.

\begin{figure}[!htb]
\centering
\includegraphics[max width=\linewidth]{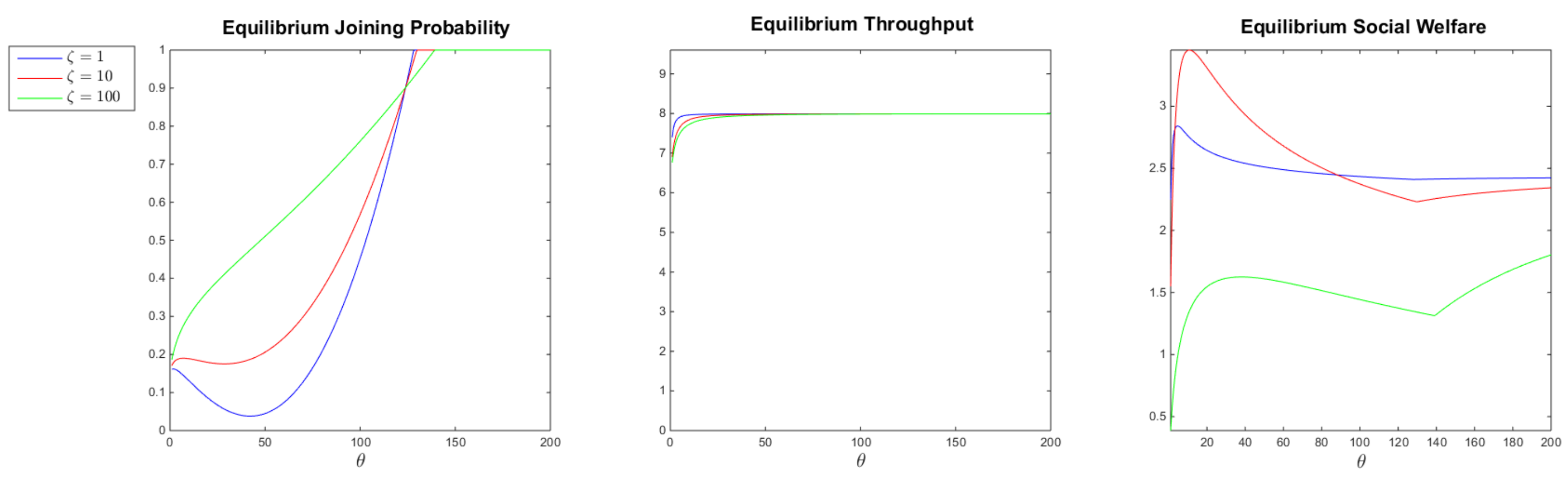}
\caption{Customer's joining probability, throughput and social welfare  with respect to $\theta$ for $\zeta=1, 10, 100$ and for $\lambda=40, \mu=8,\;R=5,\; C=10,\; f_e=f_s=r=0$.}
\label{Fig3}
\end{figure}

The main conclusion that can be drawn from the diversity of the plots is the high dependence of the performance measures, in particular of the equilibrium joining probability and social welfare, on the interplay between the operational model parameters. Due to the their opposing effects, general statements cannot be formulated for broad ranges of parameters' values. For practical situations, this suggests the necessity for a case-by-case analysis to test each set of candidate parameters individually but also indicates the broad applicability of the present model. From an analytic perspective, it underlines the significance to derive the equilibrium of the system and perform in turn comparative statics via the currently employed numerical methods and tools. \par
More concretely, as can be seen from the first panel in Figure~\ref{Fig3}, the equilibrium joining probability, is eventually increasing in $\theta$ and ultimately reaches $1$ for any value of $\zeta$. As $\theta$ increases, the uninformed customers know that they will learn the system state very quickly, so they have a strong incentive to join, independently of the value of $\zeta$. If the observable periods are not long, here $\zeta=100$, then uninformed customers have an incentive to join as $\theta$ increases, since reneging becomes easier. For longer observable periods, here $\zeta=1$, the system becomes occupied by the informed customers which disincentivizes uninformed customers to join and explains the drop in the $\zeta=1$ curve in the first panel for intermediate values of $\theta$.  \par
Finally, the equilibrium throughput quickly reaches its maximal possible value $\mu_e=8$, whereas the social welfare behaves non-monotonically in both $\theta$ and $\zeta$ after an initial steep increase for low values of $\theta$. Indeed, for values of $\theta$ close to $0$, the model essentially corresponds to an unobservable queue. Since the arrival rate, $\lambda=40$, is much higher than the service rate, $\mu=1$, uninformed customers are incentivized to balk, (see also \cite{ChFr2004}). This leaves the system uncongensted for customer arriving at the short observable periods and leads to a sharp increase in equilibrium social welfare. However, as $\theta$ increases further, the effects become mixed. Again, an abrupt change in all curves occurs when the equilibrium joining probability hits $1$. In short, some general statements that can be formulated based on the results in Figures~\ref{Fig4} and \ref{Fig3} are the following
\begin{itemize}
\item The equilibrium joining probability is a non-monotonic function of $\theta$. However, it is eventually increasing in $\theta$ and ultimately reaches $1$. The equilibrium throughput is increasing or unimodal in $\theta$, whereas the equilibrium social welfare is non-monotonic.
\item The equilibrium joining probability is decreasing in $\lambda$, the equilibrium throughput is increasing in $\lambda$ and the social welfare non-monotonic in $\lambda$. 
\item The equilibrium throughput is decreasing in $\zeta$, whereas the equilibrium joining probability and social welfare are non-monotonic in $\zeta$. 
\end{itemize}

\subsection{Fraction of refundable entrance fee}
\label{sub:set3}

In the last set of experiments, we study the effect of the fraction of the entrance fee that is refundable, i.e., of $r/f_e$ for $r\in[0,f_e]$, on the strategic customer behavior. The perfomance measures $q_e, \mu_e$ and $S_e$ have been plotted in the three panels of Figure~\ref{Fig5} as functions of $r/f_e$ in $[0,1]$ for three different service valuations, $R=7, 10$ and $15$. In all cases, the operational parameters have been kept fixed at $\lambda=1.3$, $\mu=1$, $\zeta=10$ and $\theta=1$ and the remaining economic parameters at $C=1$, $f_e=5$ and $f_s=0$. 

\begin{figure}[!htb]
\centering
\includegraphics[width=\linewidth]{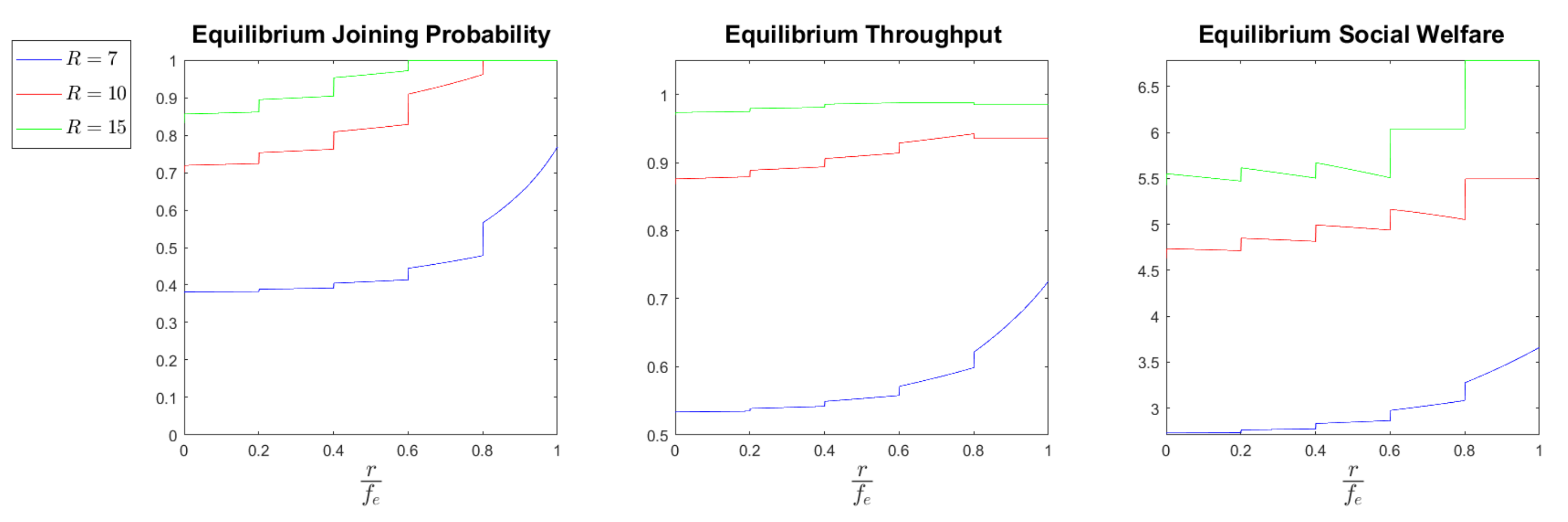}
\caption{Customer's joining probability, throughput and social welfare  with respect to $\frac{r}{f_e}$ for $R=7,\;10,\;15$ and for $\lambda=1.3,\;\mu=1,\;\zeta=10,\;\theta=1,\; C=1,\; f_e=5$ and $f_s=0$.}
\label{Fig5}
\end{figure}

The main finding is the discontinuity of all equilibrium measures which is caused by the discrete changes in the reneging threshold $n_s$ as $r/f_e$ varies. At the points of change, the equilibrium measures undergo abrupt changes or jumps (depicted as vertical lines in the plots). In each interval of continuity, the value of $n_s$ remains the same. Then, an increase in $r/f_e$ makes the uninformed customers more willing to enter and abandon later if they find a high congestion. So, again the joining probability and the abandonment probability both increase and their trade-off is not clear.\par
In a second scenario, we use the same operational and economic parameters $\lambda=1.3,\mu=1,\zeta=10,\theta=1$, and $C=1$, but we now fix $R=7$ and $f_e+f_s=5$. Again, we let $r/f_e$ vary in $[0,1]$. We examine three different scenarios for different values of $f_e=1, 3$ and $5$. Our aim is to model the situation in which the total fee is decomposed in two parts, the entrance and the service fees, and study effect of the percentage of the entrance fee that is refundable. The performance measures are plotted in the three panels of \ref{Fig6}. 

\begin{figure}[!htb]
\centering
\includegraphics[width=\textwidth, trim=0 0 0 0.2cm, clip]{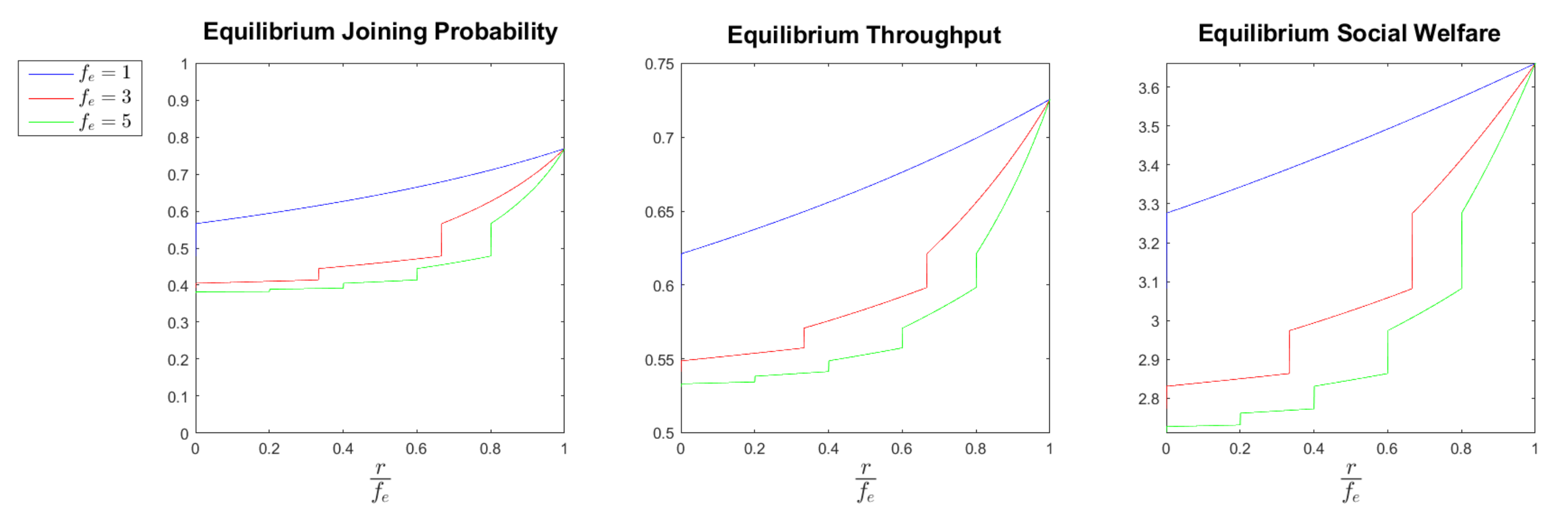}
\caption{Customer's joining probability, throughput and social welfare  with respect to $\frac{r}{f_e}$ for $f_e=1,\;3,\;5$ and for $\lambda=1.3,\;\mu=1,\;\zeta=10,\;\theta=1,\;R=7,\; C=1$ and $f_s=5-f_e$.}
\label{Fig6}
\end{figure}

The rest of the findings of Figures~\ref{Fig5} and \ref{Fig6} are fairly intuitive. All equilibrium performance measures improve as the service valuation $R$ increases or as the service fee $f_s$ increases for a given sum $f_e+f_s$ of entrance and service fees. While the equilibrium joining probability and social welfare also improve as the ratio $r/f_e$ tends to $1$, this may not necessarily be true for the equilibrium throughput, which may decrease, at least marginally, as can be inferred from the $R=15$ and $R=7$ plots in the middle panel of \ref{Fig5}. Yet, in all other cases, the equilibrium increases as well. In short, the findings of Figures~\ref{Fig5} and \ref{Fig6} are the following
\begin{itemize}
\item The equilibrium joining probability is an increasing function of $r/f_e$, the equilibrium throughput is an increasing or unimodal function of $r/f_e$ and the equilibrium social welfare is monotonic in each interval of continuity but in general, it is a non-monotonic function of $r/f_e$. All equilibrium performance measures are discontinuous functions of $r/f_e$.
\item All equilibrium performance measures are increasing in $R$ and in the ratio $f_s/f_e$ for constant $f_e+f_s$.
\end{itemize}
\noindent The results of all experiments are summarized in Table~\ref{tab:results}.

\begin{table}[!htb]
\centering
\renewcommand{\arraystretch}{1.05} 
\begin{tabulary}{0.98\linewidth}{lllp{0.1cm}p{2.2cm}p{2.2cm}l}
\toprule
\b\c Effect&\multicolumn{2}{c}{\b\c Variables}&\c&\multicolumn{3}{c}{\b\c Equilibrium performance measures}\\[0.1cm]
 & axes & plots && $q_e$ & $\mu_e$ & $S_e$\\\midrule
\multirow[t]{4}{*}{\specialcell[t]{Fraction of time \\ that the system\\ is observable}} & \c $\gamma$ &\c &\c&\c\specialcell[t]{\up for low $\lambda$,\\ \down for high $\lambda$} & \c\up or \uni &\c \up or \uni \\ 
&& $\lambda$ && \down &\up & \x\\
&\c& \c$B$ &\c& \c \down & \c \x &\c \x\\
\cmidrule{2-7}
\multirow[t]{3}{*}{\specialcell[t]{Duration of un-\\observable periods}}
 & $\theta$ &&& \x & \up or \uni& \x\\
 &\c&\c $\lambda$ &\c&\c \down &\c \up & \c \x\\
 && $\zeta$ && \x & \down & \x \\\cmidrule{2-7}
\multirow[t]{2}{*}{\specialcell[t]{Fraction of \\refundable fee}}
& $r/f_e$ \c&\c&\c& \c\up & \up or \c\uni &\c \x \\
&& $R$&& \up & \up &\up\\
&\c&\c$f_s/f_e$ &\c&\c \up &\c \up & \c\up\\ 
\bottomrule
\end{tabulary}
\caption{Equilibrium performance measures in the numerical experiments: $q_e$ denotes the equilibrium joining probability, $\mu_e$ the equilibrium throughput and $S_e$ the equilibrium social welfare. Symbol \up stands for non-decreasing, \down for non-increasing, \uni for unimodal and \x for non-monotonic nor unimodal. In column \emph{Variables}, the field \emph{axes} refers to the variables that appear in the horizontal axes of the panels in each figure and the field \emph{plots} to the variables that yield the three different plots in each panel.}
\label{tab:results}
\end{table}

\section{Summary and conclusions}\label{conclusions}

In the present paper, we considered an M/M/1 queue that alternates between exponentially distributed observable and unobservable periods and which bridges the extremal cases of continutously observable and unobservable systems. While this model unifies and generalizes the existing approaches of \cite{HuLiWa2018} and \cite{BuEcVa2017}, it remains analytically tractable since it always has a unique equilibrium that can be characterised via the system parameters. This allows for a comprehensive experimentation on the operational and economic system parameters to gain managerial insight. A main conclusion is that the equilibrium throughput and the corresponding social welfare are typically greater when an ideal level of alternation between observable and unobservable mode is used instead of the system being continuously observable or unobservable. \par
Our results imply that sufficient flexibility to control the information structure of a given queueing system improves its performance both from a managerial and a social perspective. Thus, apart from its practical relevance, the present model  may also provide a benchmark for future studies in this direction. One interesting research problem is to extend the analysis in the case where the unobservable and observable periods are of constant lengths and not exponentially distributed. This seems quite difficult from an analytical point of view, but even a numerical study deserves attention. Another interesting direction for future research concerns the case where the alternation between the observable and unobservable modes of the system is not static (i.e., specified by the exponential rates $\theta$ and $\zeta$) as in the present study, but can be dynamically controlled by the administrator of the system.

\bibliographystyle{acm}

\setstretch{1.4}
\begin{appendices}
\section{Computation of the partial generating functions of the steady-state distribution}\label{generating-functions-appendix}

To obtain the partial generating functions, defined in \eqref{gener-functions-0} and \eqref{gener-functions-1}, of the steady-state probabilities $p(n,i)$, when a given $(n_e,n_s,q)$-PES is used by the population of customers, we use the following approach. First, we multiply the balance equations for $p(n,i)$ in \eqref{bal-1}-\eqref{bal-8} by the appropriate power $z^n$ and add them to obtain equations for the partial generating functions up to a few boundary probabilities to be determined later. We then solve the latter equations using standard algebraic methods.

\subsection{Equations for the partial generating functions}
By multiplying \eqref{bal-2} with $z^n$ and summing for $1\leq n\leq n_e-1$ we obtain
\begin{equation*}
(\lambda+\mu+\zeta)P_{1a}(z)-\mu p(0,1)=\lambda \sum_{n=1}^{n_e-1}p(n-1,1) z^n +\theta \sum_{n=0}^{n_e-1} p(n,0) z^n +\mu \sum_{n=0}^{n_e-1}p(n+1,1) z^n 
\end{equation*}
which reduces after straightforward algebraic manipulations to
\begin{eqnarray}
& &\left[ (\lambda+\mu+\zeta)z -\lambda z^2-\mu \right] P_{1a}(z)-\theta z P_{0a}(z)\nonumber\\
&=&\mu (z-1) p(0,1) -\lambda p(n_e-1,1)z^{n_e+1}+ \mu p(n_e,1)z^{n_e}.\label{eq21-0a,1a-genfunc}
\end{eqnarray}
\noindent 
Similarly, we derive a second equation for the generating functions  $P_{0a}(z)$ and $P_{1a}(z)$, multiplying \eqref{bal-7} by $z^n$ and summing for $1\leq n\leq n_s$, along with \eqref{bal-6}. A bit of algebra yields

\begin{eqnarray}
&&\left[(\lambda q+\mu+\theta)z -\lambda q z^2-\mu \right] P_{0a}(z)-\zeta z P_{1a}(z)\nonumber\\
&=&\mu (z-1) p(0,0) -\lambda q p(n_e-1,0)z^{n_e+1}+ \mu p(n_e,0)z^{n_e}.\label{eq23-0a,1a-genfunc}
\end{eqnarray}
Equations \eqref{eq21-0a,1a-genfunc}, \eqref{eq23-0a,1a-genfunc} can be  written in matrix-form as
\begin{equation}\label{eq28linsyst-a-genfuc}
\left[\begin{array}{cc} -\theta z& (\lambda+\mu+\zeta)z -\lambda z^2-\mu \\ 
(\lambda q+\mu+\theta)z -\lambda q z^2-\mu & -\zeta z 
\end{array}\right]\cdot \left[\begin{array}{l} P_{0a}(z)\\ P_{1a}(z)
\end{array}\right]=\left[\begin{array}{l} N_{0a}(z)\\ N_{1a}(z)
\end{array}\right], 
\end{equation} 
where 
\begin{align}
N_{0a}(z)&=\mu (z-1) p(0,1) -\lambda p(n_e-1,1)z^{n_e+1}+ \mu p(n_e,1)z^{n_e} \label{N0a}\\
N_{1a}(z)&=\mu (z-1) p(0,0) -\lambda q p(n_e-1,0)z^{n_e+1}+ \mu p(n_e,0)z^{n_e}. \label{N1a}
\end{align}
Next, we derive a linear system for the generating functions $P_{0b}(z)$ and $P_{1b}(z)$ following the same procedure. More specifically, multiplying \eqref{bal-4} with $z^{n-n_e}$ for $n_e+1\leq n\leq n_s-1$ and summing them together with \eqref{bal-3} yields
\begin{equation*}
(\mu+\zeta)\sum_{n=n_e}^{n_s-1}p(n,1) z^{n-n_e}=\lambda p(n_e-1,1)+\theta \sum_{n=n_e}^{n_s-1} p(n,0) z^{n-n_e} +\mu \sum_{n=n_e}^{n_s-1}p(n+1,1) z^{n-n_e}.
\end{equation*}
which can be written in a simplified form as
\begin{eqnarray}\label{eq22-0b,1b-genfunc}
\left[ (\mu+\zeta)z -\mu \right] P_{1b}(z)-\theta z P_{0b}(z)=\lambda z p(n_e-1,1) -\mu p(n_e,1)+ \mu p(n_s,1)z^{n_s-n_e}.
\end{eqnarray}
A second equation for $P_{0b}(z)$ and $P_{1b}(z)$, can be derived from \eqref{bal-7}, multiplying with $z^{n-n_e}$ and summing over all $n_e\leq n\leq n_s-1$. It yields  
\begin{eqnarray}
&&\left[ (\lambda q+\mu+\theta)z -\lambda q z^2-\mu \right] P_{0b}(z)-\zeta z P_{1b}(z)\nonumber\\
&=&-\lambda q p(n_s-1,0)z^{n_s-n_e+1}+ \mu p(n_s,0)z^{n_s-n_e}+\lambda q p(n_e-1,0)z- \mu p(n_e,0).\label{eq24-0b,1b-genfunc}
\end{eqnarray}
Again, \eqref{eq22-0b,1b-genfunc}, \eqref{eq24-0b,1b-genfunc}, can be written in matrix-form as
\begin{equation}\label{eq27linsyst-b-genfuc}
\left[\begin{array}{cc} -\theta z& (\mu+\zeta)z -\mu \\ 
(\lambda q+\mu+\theta)z -\lambda q z^2-\mu & -\zeta z 
\end{array}\right]\cdot \left[\begin{array}{l} P_{0b}(z)\\ P_{1b}(z)
\end{array}\right]=\left[\begin{array}{l} N_{0b}(z)\\ N_{1b}(z)
\end{array}\right], 
\end{equation} 

\noindent where 

\begin{align}
N_{0b}(z)&=\mu p(n_s,1)z^{n_s-n_e}+\lambda p(n_e-1,1) z -\mu p(n_e,1) , \label{N0b}\\
N_{1b}(z)&= -\lambda q p(n_s-1,0)z^{n_s-n_e+1}+\mu p(n_s,0)z^{n_s-n_e}+\lambda q p(n_e-1,0)z- \mu p(n_e,0). \label{N1b}
\end{align}
For deriving an equation for $P_{0c}(z)$, we multiply \eqref{bal-7} and \eqref{bal-8} with $z^{n-n_s}$ and sum over all $n\geq n_s$. We have that 
\begin{equation*}
(\lambda q +\mu+\theta) \sum_{n=n_s}^{\infty}p(n,0)z^{n-n_s}= \lambda q \sum_{n=n_s}^{\infty}p(n-1,0)z^{n-n_s}+\zeta p(n_s,1)+\mu \sum_{n=n_s}^{\infty}p(n+1,0)z^{n-n_s},
\end{equation*}  
which reduces easily to 
\begin{equation}\label{eq25-0c-genfunc}
\left[(\lambda q+\mu+\theta) z -\lambda q z^2 - \mu\right] P_{0c}(z)=N_{0c}(z),
\end{equation}
where $ N_{0c}(z)=\lambda q p(n_s-1,0) z+\zeta p(n_s,1) z - \mu p(n_s,0)$. Hence, the balance equations \eqref{bal-1}-\eqref{bal-8} have been transformed into equations \eqref{eq28linsyst-a-genfuc}, \eqref{eq27linsyst-b-genfuc} and \eqref{eq25-0c-genfunc} for the partial generating functions, which can be in turn easily expressed in closed form as rational functions of $z$ via Cramer's rule. It remains to obtain the boundary probabilities that appear in $N_{0a}(z)$, $N_{0b}(z)$, $N_{0c}(z)$, $N_{1a}(z)$ and $N_{1b}(z)$. To this end, we will use the balance equation \eqref{bal-5} and the normalization equation \eqref{norm_cond} that have not been used yet. However, these are only $2$ equations in the $9$ unknown boundary probabilities. The additional required equations will be derived from the roots of the determinants of the linear systems \eqref{eq28linsyst-a-genfuc}, \eqref{eq27linsyst-b-genfuc} and the roots of the coefficient of $P_{0c}(z)$ in \eqref{eq25-0c-genfunc}.

\subsection{Roots of the denominators of the partial generating functions}
Starting with the $P_{0c}(z)$, we define 
\begin{equation}\label{eq_coeff_gen-c}
D_c(z)=(\lambda q+\mu+\theta) z -\lambda q z^2-\mu,
\end{equation}
which is the coefficient of $P_{0c}(z)$ in \eqref{eq25-0c-genfunc}. Since $D_c(0)=-\mu<0$, $D_c(1)=\theta>0$ and $\lim_{z\rightarrow\infty} D_c(z)=-\infty$, it follows from Bolzano's Theorem that there exist real roots $z_{c,1}\in(0,1)$ and $z_{c,2}\in(1,\infty)$ of $D_c(z)$ which are given by
\begin{equation}
z_{c,1},z_{c,2}=\frac{\lambda q+\mu+\theta\mp\sqrt{(\lambda q+\mu+\theta)^2-4\lambda q \mu}}{2\lambda q}.\label{root_gen-c}
\end{equation}
Next, we derive the roots of the determinant $D_b(z)$ of the linear system \eqref{eq27linsyst-b-genfuc}. We have that 

\begin{align*}
D_b(z)&= \det \left[\begin{array}{cc} -\theta z& (\mu+\zeta)z -\mu \\ 
(\lambda q+\mu+\theta)z -\lambda q z^2-\mu & -\zeta z 
\end{array}\right]\\
&=(\lambda q (\mu+\zeta) z^2 - (\lambda q + \mu +\theta +\zeta)\mu z + \mu^2) (z-1).
\end{align*}  
Therefore, $D_b(z)=0$ has three roots, i.e., $z_{b,1}=1$ and
\begin{equation*}
z_{b,2},z_{b,3}=\frac{(\lambda q + \mu +\theta +\zeta)\mu\mp\sqrt{(\lambda q + \mu +\theta +\zeta)^2\mu^2-4\lambda q (\mu+\zeta)\mu^2}}{2\lambda q (\mu+\zeta)}.
\end{equation*}
Similarly, for the determinant $D_a(z)$ of the linear system \eqref{eq28linsyst-a-genfuc} we have 
\begin{align*}
D_a(z)&= \det \left[\begin{array}{cc} -\theta z& (\lambda+\mu+\zeta)z -\lambda z^2-\mu \\ (\lambda q+\mu+\theta)z -\lambda q z^2-\mu & -\zeta z 
\end{array}\right]\\
&=(-\lambda^2 q z^3 + \lambda (\mu+\theta+(\lambda +\mu+\zeta) q ) z^2 - \mu (\lambda q + \mu +\theta +\zeta+\lambda) z + \mu^2)\cdot (z-1).
\end{align*}  
Therefore, $D_a(z)=0$ has four roots, i.e.,  $z_{a,1}=1$ and the three roots of the cubic equation 
\begin{equation*}
-\lambda^2 q z^3 + \lambda (\mu+\theta+(\lambda +\mu+\zeta) q ) z^2 - \mu (\lambda q + \mu +\theta +\zeta+\lambda) z + \mu^2=0,
\end{equation*}
which can be calculated by the general formula for the roots of a cubic equation and are denoted as $z_{a,k}$, for $k=2,3,4$.

\subsection{Computation of the partial generating functions}
In this section, we provide a simple procedure for the computation of the partial generating functions.  Solving \eqref{eq25-0c-genfunc} for  $P_{0c}(z)$ yields
\begin{equation}\label{gen-c_first}
P_{0c}(z)=\frac{(\lambda q p(n_s-1,0) +\zeta p(n_s,1)) z - \mu p(n_s,0)}{\left[(\lambda q+\mu+\theta) z -\lambda q z^2 - \mu\right]}.
\end{equation}    
Since $z_{c,1}$ given by \eqref{root_gen-c} (with the minus sign), is a root of the denominator of $P_{0c}(z)$ inside the closed unit disc, then it should necessarily be a root of its numerator, as $P_{0c}(z)$ is known to converge  in the closed unit disc (as a probability generating function). Hence, the numerator in \eqref{gen-c_first} is a multiple of $z-z_{c,1}$, whereas the denominator can be factored as $-\lambda q (z-z_{c,1})(z-z_{c,2})$. Thus, \eqref{gen-c_first} can be rewritten as 
\begin{equation*}
P_{0c}(z)=\frac{C(z-z_{c,1})}{(z-z_{c,1})(z-z_{c,2})}=\frac{C}{z-z_{c,2}},
\end{equation*}
where $C$ is a constant. But $P_{0c}(0)=p(n_s,0)$, so we conclude that $C=-z_{c,2}\;p(n_s,0)$. Recall, now, that $z_{c,1},z_{c,2}$ are roots of the quadratic equation in \eqref{eq_coeff_gen-c}, thus $z_{c,1}\cdot z_{c,2}=\frac{\mu}{\lambda q}$. Therefore $P_{0c}(z)$ assumes the form

\begin{equation}\label{gen-c_second}
P_{0c}(z)=\frac{p(n_s,0)}{1-\frac{\lambda q z_{c,1}}{\mu} z}=\sum_{n=n_s}^{\infty}p(n_s,0) (\frac{\lambda q z_{c,1}}{\mu})^{n-n_s} z^{n-n_s}. 
\end{equation}
For the derivation of $P_{0b}(z),P_{1b}(z)$ and $P_{0a}(z),P_{1a}(z)$, we apply Cramer's rule to the linear systems \eqref{eq27linsyst-b-genfuc} and \eqref{eq28linsyst-a-genfuc}, respectively. Therefore, for $z\neq z_{1,b},z_{2,b},z_{3,b}$, we have
\begin{equation} \label{eq42linsyst-b-genfuc}
\left[\begin{array}{l} P_{0b}(z)\\ P_{1b}(z)
\end{array}\right]=\left[\begin{array}{l} \frac{-\zeta z N_{0b}(z)-\left[(\mu+\zeta)z-\mu\right]N_{1b}(z)}{D_b(z)}\\ \frac{-\theta z N_{1b}(z)-\left[(\lambda q+\mu+\theta)z-\lambda q z^2-\mu\right]N_{0b}(z)}{D_b(z)} 
\end{array}\right], 
\end{equation} 
and for $z\neq z_{1,a},z_{2,a},z_{3,a}, z_{4,a}$ we have
\begin{equation}\label{eq43linsyst-a-genfuc}
\left[\begin{array}{l} P_{0a}(z)\\ P_{1a}(z)
\end{array}\right]=\left[\begin{array}{l} \frac{-\zeta z N_{0a}(z)-\left[(\lambda +\mu+\zeta)z-\lambda z^2-\mu\right]N_{1a}(z)}{D_a(z)}\\ \frac{-\theta z N_{1a}(z)-\left[(\lambda q+\mu+\theta)z-\lambda q z^2-\mu\right]N_{0a}(z)}{D_a(z)} 
\end{array}\right].
\end{equation} 
Now, we have to compute the boundary probabilities that appear in $N_{0a}(z)$, $N_{0b}(z)$, $N_{1a}(z)$, $N_{1b}(z)$ and $p(n_s,0)$. These are 9 probabilities: $p(0,0)$, $p(0,1)$, $p(n_e-1,0)$, $p(n_e-1,1)$, $p(n_e,0)$, $p(n_e,1)$, $p(n_s-1,0)$, $p(n_s,0)$ and $p(n_s,1)$ (see \eqref{N0a},\eqref{N1a},\eqref{N0b},\eqref{N1b} and \eqref{gen-c_second}).

Rewriting \eqref{bal-5} (that has not been used for the derivation of the equations that govern the partial generating functions) in terms of $P_{0c}(z)$ yields
\begin{equation*}
(\mu+\zeta)p(n_s,1)=\theta P_{0c}(1)=\frac{\theta p(n_s,0)}{1-\frac{\lambda q z_{c,1}}{\mu}}
\end{equation*}
and, we obtain
\begin{equation}\label{eq44_p(ns,1)}
p(n_s,1)=\frac{\theta}{(\mu+\zeta)(1-\frac{\lambda q z_{c,1}}{\mu})}p(n_s,0).
\end{equation}
So, we have expressed $p(n_s,1)$ in terms of $p(n_s,0)$. Next, to obtain the rest 7 unknown probabilities in terms of $p(n_s,0)$, we exploit the fact that the numerator of $P_{0b}(z)$ given in \eqref{eq42linsyst-b-genfuc}, and the numerator of $P_{0a}(z)$ given in  \eqref{eq43linsyst-a-genfuc} should vanish for $z= z_{b,1},z_{b,2},z_{b,3}$ and for $z= z_{a,1},z_{a,2},z_{a,3}, z_{a,4}$, respectively, because these partial generating functions are polynomials and cannot have singularities (poles). Therefore, we obtain the following $7$ equations, one for each root of the corresponding denominator, to obtain the remaining $7$ probabilities $p(0,0)$, $p(0,1)$, $p(n_e-1,0),p(n_e-1,1),p(n_e,0),p(n_e,1)$ and $p(n_s-1,1)$ in terms of $p(n_s,0)$.
These are:
\begin{align}
-\zeta z_{b,i} N_{0b}(z_{b,i})-\left[(\mu+\zeta)z_{b,i}-\mu\right]N_{1b}(z_{b,i})=0, &&& \hbox{ for }i=1,2,3,\label{boundary-prob-eq-1}\\
-\zeta z_{a,i} N_{0a}(z_{a,i})-\left[(\lambda +\mu+\zeta)z_{a,i}-\lambda z_{a,i}^2-\mu\right]N_{1a}(z_{a,i})=0, &&&\hbox{ for }i=1,2,3,4.\label{boundary-prob-eq-2}
\end{align}
Finally, $p(n_s,0)$ is determined using the normalizing equation, and the derivation of the partial generating functions is completed. In practice, one assigns an arbitrary positive value to $p(n_s,0)$ (e.g., $p(n_s,0)=1$), then computes the other boundary probabilities using \eqref{eq44_p(ns,1)} and solving the linear system of \eqref{boundary-prob-eq-1} and \eqref{boundary-prob-eq-2} and finally normalizes the solution so that the total steady-state probability be 1.


\end{appendices}

\end{document}